\DeclareMathOperator{\dist}{dist}
\DeclareMathOperator{\supp}{supp}
\def\eps{{\varepsilon}}
\def\O{\Omega}
\def\R{\mathbb{R}}
\def\F{\mathcal{F}}
\def\H{{\mathcal H}}
\def\eps{\varepsilon}
\newcommand{\be}{\begin{equation}}
\newcommand{\ee}{\end{equation}}
\newcommand{\bib}[4]{\bibitem{#1}{\sc#2: }{\it#3. }{#4.}}
\numberwithin{equation}{section}
\theoremstyle{plain}
\newtheorem{theo}{Theorem}[section]
\newtheorem{lemm}[theo]{Lemma}
\newtheorem{prop}[theo]{Proposition}
\theoremstyle{definition}
\newtheorem{rema}[theo]{Remark}
\title[BMO-type functionals]{BMO-type functionals related to the total variation and connection to denoising models}
\author[S. Guarino Lo Bianco]{Serena Guarino Lo Bianco}
\author[R. Schiattarella]{Roberta Schiattarella}
\date{}
\begin{document}

\maketitle

\begin{center}
    {\it Dedicated to G.Buttazzo on the occasion of his 70'th birthday\footnote{This paper is dedicated to Professor G. Buttazzo who inspired this reasearch.These results are an outgrowth of an informal discussion that took place during the workshop \lq\lq Weekend di lavoro su Calcolo delle Variazioni" held in Montecatini Terme (PT) in November 2022.}}
\end{center}

\begin{abstract}
The purpose of this paper is to analyze the asymptotic behaviour in the spirit of $\Gamma$-convergence of BMO-type functionals related to the total variation of a function $u$. Moreover, we deal with a minimization problem coming from applications in image processing. 
\end{abstract}

\textbf{Keywords:} Total variation, bounded variation, BMO, $\Gamma$-convergence.

\textbf{2020 Mathematics Subject Classification:} 49Q20, 49J45, 26B30, 26D10

%%%%%%%%%%%%%%%%%%%%%%%%%%%%%%%%%%%%%%%%%%%%%%%%%%
\section{Introduction}

Recently the study of characterizations of Sobolev and bounded variation functions by certain BMO-type seminorms has emerged as an intriguing research area.
In \cite{BBM} the authors, inspired by the celebrated space of John and Nirenberg of bounded mean oscillation ($BMO$), introduced the space $B$ of functions from the unit cube $Q= \left(-\frac{1}{2}, \frac{1}{2} \right)^n$ such that the following seminorm is ﬁnite
\[
[u]_B:= \sup_{0< \eps <1} [u]_\eps, \quad \text{where\,\,} [u]_\eps=\eps^{n-1} \sup_{\mathcal G_\eps} \sum_{Q'\in \mathcal G_\eps}  \fint_{Q'}\left|u(x)-\fint_{Q'}u\right|\,dx.
\]
Here $\mathcal G_\eps$ denotes a collection of mutually disjoint $\eps$-cubes $Q'$ of the type $Q^\prime=x+\eps Q$ whose cardinality does not exceed $\eps^{1-n}$. The function space $B$ contains in particular $BMO$, the space of bounded variation functions $BV$ and the fractional space $W^{\frac1 p,p}$ for $1 \leq p < \infty$.

Many variants have been considered in the last years. In particular when the family involves $\eps$-cubes of general orientation (this is the \textit{isotropic case}), it is proved in \cite{FMS1} and \cite{DFP} that given an open set $\Omega \subset \R^n$ and a function $u \in SBV (\Omega)$, the space of special $BV$ functions, then
\be\label{theoFMS}
\lim_{\eps\rightarrow 0} \eps^{n-1} \sup_{\mathcal J_\eps} \sum_{Q'\in \mathcal J_\eps}  \fint_{Q'}\left|u(x)-\fint_{Q'}u\right|\,dx = \frac 1 4 \int_\O |\nabla u|\, dx + \frac 1 2 |D^s u|(\O)\,.
\ee
Here $\mathcal{J}_\eps$ is a family of pairwise disjoint cubes of side length $\eps$ contained in $\Omega$. By considering in \eqref{theoFMS} the characteristic function of a measurable set $A\subset \R^n$ a characterization of ﬁnite perimeter $P(A)$ is obtained (this was originally proved in \cite{ABBF}).

The \textit{anisotropic} version of \eqref{theoFMS} was considered in \cite{AC} and \cite{FFGS} (see also \cite{FGS}). Given $D$ a bounded and connected open set with Lipschitz boundary and a function $u\in L^1(\Omega)$, for any $\eps>0$ we consider the following functional
\begin{equation}\label{Hfunct}
H_\eps(u, \Omega) = \eps^{n-1} \sup_{\H_\eps} \sum_{D'\in\mathcal H_\eps}  \fint_{D'} \left|u(x)-\fint_{D'}u \right|\, dx
\end{equation}
where $\H_\eps$ is a family of pairwise disjoint translations $D'$ of $\eps D$ contained in $\Omega$.\\  
In \cite{FFGS} it is proved that if $D$ is a bounded open set satisfying some mild regularity assumptions and $u\in SBV(\Omega)$, then there exist two Lipschitz continuous 1-homogeneous  functions $\varphi,\,\psi:\mathbb{R}^{n}\to (0,+\infty)$, $\psi\leq\varphi$, strictly positive on $\R^{n}\setminus\{0\}$, $\psi$ convex, such that
\be\label{theoFFGS}
\lim_{\eps\to 0} H_\eps(u,\O) = \int_{\Omega} \psi(\nabla u(x))\, dx + \int_{J_u}\left(u^+(x)-u^-(x)\right)\varphi(\nu_u(x))\,d\H^{n-1}(x)\,.
\ee
In \eqref{theoFFGS} $\nabla u$ stands for the absolutely continuous part of the gradient measure $Du$, $J_u$ is the jump set of $u$, $u^+>u^-$ are the traces of $u$ on both sides of $J_u$ and $\nu_u$ is the generalized normal to $J_u$ oriented in the direction going from $u^-$ to $u^+$. The particular case $u=\chi_A$ where $A\subset \R^n$ is a set of finite perimeter, was studied in \cite{AC}.\\
One may then be tempted to infer that the same conclusion holds for every $u\in BV(\Omega)$. In fact, the quantity $H_\eps(u, \Omega)$ is strictly related to the total variation $|Du|(\Omega)$ of $u$ (see \cite{FFGS}). Indeed, by using H\"older inequality and Poincar\'e-Wirtinger inequality, for any $u\in BV$, we get that there exists a constant $C>0$ depending only on $D$ such that if $D'=x_0+ \eps D$, then 
$$
\eps^{n-1} \sum_{D'\in\mathcal H_\eps}  \fint_{D'} \left|u(x)-\fint_{D'}u \right|\, dx\leq C |Du|(\Omega)
$$
and thus, 
\begin{equation}\label{Poinc}
H_\eps(u, \Omega) \leq C |Du|(\Omega).
\end{equation}
However, for general $BV$ functions $u$, the point-wise limit of $H_\eps(u, \Omega)$ is more difficult to grasp. In particular, for functions $u\in BV(\Omega)\setminus SBV(\Omega)$ having the so-called Cantor part of the derivative, it is possible that the limit of $H_\eps(u, \Omega)$ as $\eps$ goes to $0$ does not exist, as shown by a one dimensional example of \cite{FMS1}. 
Nevertheless, one can still characterize the functions in $BV(\Omega)$ as the functions $u\in L^1(\Omega)$ such that 
$\limsup_{\eps\to 0^+} H_\eps(u, \Omega)<+ \infty$ (see \eqref{charBV} below).\\
The non existence of the point-wise limit for general $BV$ functions suggests that the mode of convergence of $H_\eps$ to the total variation as $\eps$ goes to $0$ is extremely delicate. It is natural to expect that the appropriate framework in this case to analyze the asymptotic behaviour of $H_\eps$ is the $\Gamma$- limit (in the sense of E. De Giorgi). Obviously, since we are considering an anisotropic variant of the BMO-type seminorm by using, instead of cubes, covering families made by translations of a given set $D$, as $\Gamma$-limit we expect an anisotropic version of the total variation. For this reason, by considering the 1- homogeneous, Lipschitz function $\psi: \Omega \to (0,+ \infty)$ that appears in \eqref{theoFFGS}, we define 
\begin{equation*}
\Psi(Du)(\Omega)= \inf \left\{  \liminf_{h\to\infty} \int_\Omega \psi(\nabla u_h)\, :\, u_h\in C^\infty(\Omega), u_h\to u \text{ in } L^1(\Omega ) \right\}.
\end{equation*}
This definition with $\psi=|\cdot|$ coincides with the usual $BV$ total variation (see Theorem 3.9. in \cite{AFP}). We observe that if $u\in BV(\Omega)$ then $\Psi(Du)(\Omega)<+ \infty$ (see \eqref{equivnorm}).

The main result of this paper reads as follows.
\begin{theo}\label{gammaH}
	The family of functionals $(H_\eps)$ defined in \eqref{Hfunct} for $\eps>0$, $\Gamma$-converges in $L^1$ to the functional $H$ defined for any $u\in L^1(\Omega)$ by
$$
H(u,\Omega)= \begin{cases}
 \Psi(Du) & \text{ if } u\in BV(\Omega)\\
 + \infty  & \text{ otherwise}.
\end{cases}
$$
\end{theo}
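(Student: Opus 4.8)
The plan is to verify the two inequalities that together constitute $\Gamma$-convergence in $L^1(\O)$: the \emph{$\Gamma$-liminf inequality}
\[
\liminf_{\eps\to0} H_\eps(u_\eps,\O)\ge H(u,\O)\qquad\text{whenever }u_\eps\to u\text{ in }L^1(\O),
\]
and the \emph{$\Gamma$-limsup inequality}, i.e. the existence, for each $u\in L^1(\O)$, of a recovery sequence $u_\eps\to u$ in $L^1(\O)$ with $\limsup_{\eps\to0}H_\eps(u_\eps,\O)\le H(u,\O)$. I expect the upper bound to be comparatively soft and the lower bound to carry the real difficulty.

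For the upper bound I would argue by relaxation. If $u\notin BV(\O)$ there is nothing to prove, so let $u\in BV(\O)$. By the very definition of $\Psi(Du)$ there are functions $v_h\in C^\infty(\O)$ with $v_h\to u$ in $L^1(\O)$ and $\int_\O\psi(\nabla v_h)\to\Psi(Du)(\O)$; since $\psi$ is $1$-homogeneous and strictly positive we have $\psi\ge c|\cdot|$, so each $v_h$ lies in $W^{1,1}(\O)\subset SBV(\O)$ with empty jump set. Applying the pointwise result \eqref{theoFFGS} to $v_h$ makes the surface term vanish, giving $\lim_{\eps\to0}H_\eps(v_h,\O)=\int_\O\psi(\nabla v_h)$. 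A diagonal choice $\eps\mapsto h(\eps)\to\infty$ then produces $u_\eps:=v_{h(\eps)}\to u$ in $L^1(\O)$ with $\limsup_{\eps\to0}H_\eps(u_\eps,\O)\le\lim_h\int_\O\psi(\nabla v_h)=\Psi(Du)(\O)$, which is the desired recovery sequence. Equivalently, the $\Gamma$-$\limsup$ is $L^1$-lower semicontinuous and bounded above by $u\mapsto\int_\O\psi(\nabla u)$ on smooth functions, hence by its relaxation $\Psi(D\cdot)$.

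For the lower bound I would proceed in three steps. First a \emph{compactness} step: assuming $\liminf_\eps H_\eps(u_\eps,\O)<\infty$, I would use a lower Poincar\'e-type estimate on a tiling of $\O$ by $\eps$-copies of $D$ to bound the total variation of a mollification of $u_\eps$, and then pass to the limit using the $L^1$-lower semicontinuity of $|D\cdot|$ to conclude $u\in BV(\O)$ (this is the two-sided content behind the characterization \eqref{charBV}). Second, a \emph{localization} step: for open $A\subseteq\O$ set $H'(u,A)$ equal to the $\Gamma$-$\liminf$ of $H_\eps(\cdot,A)$ at $u$. Because disjoint admissible covering families may be concatenated, $H_\eps(\cdot,\cdot)$ is superadditive on disjoint open sets, and this superadditivity is inherited by $H'(u,\cdot)$; together with the global bound \eqref{Poinc} this lets me treat $A\mapsto H'(u,A)$ as (dominated by the trace on open sets of) a Radon measure. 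Third, a \emph{blow-up} step to bound the local densities from below: around $\L^n$-a.e. point $u_\eps$ blows up to an affine map and a direct lower estimate on the oscillation recovers $\psi(\nabla u(x_0))$, while around $\H^{n-1}$-a.e. jump point (and, analogously, $|D^cu|$-a.e. Cantor point) the blow-up is a one-dimensional transition in the direction $\nu_u$. A De Giorgi--Letta type gluing of these densities against the superadditive set function $H'(u,\cdot)$ then yields
\[
H'(u,\O)\ge\int_\O\psi(\nabla u)\,dx+\int_\O\psi\big(\tfrac{dD^su}{d|D^su|}\big)\,d|D^su|=\Psi(Du)(\O).
\]

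The crux is the lower estimate for the surface density. Note the asymmetry with \eqref{theoFFGS}: the pointwise limit charges a jump with the \emph{larger} density $\varphi(\nu)$, whereas the $\Gamma$-limit must charge it with the convex $\psi(\nu)\le\varphi(\nu)$, the gap being exactly what an oscillating recovery sequence exploits. Thus the main obstacle is to show that \emph{no} competitor $u_\eps\to u$ can push the asymptotic oscillation energy across a jump below $\psi(\nu_u)(u^+-u^-)$ per unit area; concretely, one must solve the associated cell problem and check that its value is the convex integrand $\psi$ rather than $\varphi$, reconciling the lower blow-up bound with the recovery construction of the previous step. Controlling the Cantor part by the same density, where no clean blow-up profile is available, is the remaining delicate point.
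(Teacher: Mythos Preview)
Your $\Gamma$-limsup argument matches the paper's exactly: approximate $u$ by smooth $v_h$ with $\int_\O\psi(\nabla v_h)\to\Psi(Du)(\O)$, apply the pointwise result (Theorem~\ref{w11lim1}a) to each $v_h$, and diagonalize.

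Your $\Gamma$-liminf strategy, however, is quite different from the paper's and considerably heavier. The paper does \emph{not} use localization plus blow-up; it uses a short mollification trick based on Lemma~\ref{lemmaconvol}. The key inequality $H_\eps(\rho_\sigma*v,A)\le H_\eps(v,\O)$ (Jensen applied to the convolution average) gives, for any $u_\eps\to u$ in $L^1$ and any $A\Subset\O$,
\[
H_\eps(u_\eps,\O)\ \ge\ H_\eps(u_\eps*\rho_\sigma,A).
\]
The estimate \eqref{upperdiff} together with the Poincar\'e bound \eqref{Poinc} yields
\[
\big|H_\eps(u_\eps*\rho_\sigma,A)-H_\eps(u*\rho_\sigma,A)\big|\ \le\ C\,\|\nabla\rho_\sigma\|_{L^1}\,\|u_\eps-u\|_{L^1(A)}\xrightarrow[\eps\to0]{}0,
\]
so the liminf is bounded below by $\lim_{\eps\to0}H_\eps(u*\rho_\sigma,A)=\int_A\psi\big(\nabla(u*\rho_\sigma)\big)$, using Theorem~\ref{w11lim1}a on the smooth function $u*\rho_\sigma$. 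Sending $\sigma\to0$ and $A\uparrow\O$, the very definition \eqref{defPsi} of $\Psi(Du)$ as an $L^1$-relaxation gives $\Psi(Du)(\O)$ on the right. No separate treatment of the jump or Cantor parts is needed; in particular the cell problem you flag (showing the surface density is the convex $\psi$ rather than $\varphi$) and the Cantor-part analysis are entirely bypassed. Your blow-up route could in principle be completed, but the two points you yourself identify as delicate are genuine obstacles that the paper's argument simply never encounters.
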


Finally, the {\it higher order and isotropic} counterpart of $H_\eps$ is the following functional defined for functions $u$ in the higher Sobolev space $W^{m-1,1}_{\text{loc}}(\Omega)$, $m\in \mathbb{N}$, for any $\eps>0$, 
\be\label{Keps}
K_\eps(u,m, \O)= \eps^{n-m} \sup_{\mathcal{G}_\eps}\sum_{Q^\prime\in \mathcal{G}_\eps} \fint_{Q^\prime} \left| u(x) -P^{m-1}_{Q^\prime} [u](x) \right|\, dx, 
\ee
where the families $\mathcal G_\eps$ are made of disjoint cubes $Q^\prime= x_0+\eps Q$ of side length $\eps$, centered in $x_0$, with arbitrary orientation contained in $\O$ and $P^{m-1}_{Q^\prime}[u]$ is the polynomial of degree $m-1$ centered at $x_0$, given by
\begin{equation}\label{poly}
P^{m-1}_{Q^\prime}[u](x)= \sum_{|\alpha|\leq m-1} (x-x_0)^\alpha \fint_{Q'}{\left(D^{\alpha}u\right)(s)}\, ds.
\end{equation}
In \cite{GS} it was proved that  if $u\in W^{m,1}_{\text{loc}}(\Omega)$ then
\be\label{theoGS}
		\lim_{\eps\to 0} K_\eps(u,m, \O) = \beta(n,m)\int_{\O} |\nabla^m u|\, dx,
\ee
where
\begin{equation}\label{beta}
	\beta(n,m):=  \max_{\nu \in \mathbb S^{N-1}} \frac{1}{m!}\int_{Q} \Big\lvert\nu\cdot x^m- \int_Q \nu\cdot y^m\, dy \Big \lvert\, dx.
\end{equation}
We refer to Section 2 of \cite{GS} for the notation used in \eqref{beta}. Observe that this last result is the extension of Theorem 2.2 in \cite{FMS2}.

%Again, the quantity $K_\eps(u,m,\Omega)$ is is strictly related to the high order total variation of $u$ (see ???). 
Also, the point-wise limit of $K_\eps$ for functions  in $BV^m$ does not exists but it is possible to characterize the functions in $BV^m(\Omega)$, the space of functions of m-th order bounded variation (see \cite{DM}), as the functions such that $\limsup_{\eps\to 0} K_\eps(u,m,\Omega)<+\infty$ (see \eqref{charBVm} below). Still using the $\Gamma$-convergence framework, we have the following result.
\begin{theo}\label{gammaK}
	The family of functionals $(K_\eps)$ defined in \eqref{Keps} for $\eps>0$, $\Gamma$-converges in $L^1$ to the functional $K$ defined for any $u\in L^1(\Omega)$ by
$$
\Gamma-\lim K_\eps(u,m,\Omega)= \begin{cases}
 \beta(m,n)|D^m u|(\Omega)& \text{ if } u\in BV^m(\Omega)\\
 + \infty  & \text{ otherwise}
\end{cases}
$$
\end{theo}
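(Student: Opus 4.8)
The plan is to establish the two $\Gamma$-inequalities separately, using as the sole analytic input the pointwise asymptotics \eqref{theoGS} on smooth functions, transported to arbitrary $L^1$-sequences by two soft properties of $K_\eps$. Write $\beta:=\beta(n,m)$, fix a standard mollifier family $(\rho_\delta)$ with $\supp\rho_\delta\subset B_\delta$, and set $\Omega_\delta=\{x\in\Omega:\dist(x,\partial\Omega)>\delta\}$. The first property I would record is that \emph{mollification cannot increase the functional}: for $v\in W^{m-1,1}_{\mathrm{loc}}(\Omega)$,
\[
K_\eps(v*\rho_\delta,m,\Omega_\delta)\le K_\eps(v,m,\Omega).
\]
This follows from the linearity and translation covariance of the averaged Taylor polynomial in \eqref{poly}, which yield the pointwise identity $(v*\rho_\delta)(x)-P^{m-1}_{Q'}[v*\rho_\delta](x)=\int\rho_\delta(y)\big(v(x-y)-P^{m-1}_{Q'-y}[v](x-y)\big)\,dy$; Jensen's inequality, a change of variables, and summation over an admissible family $\{Q'\}\subset\Omega_\delta$ (whose translates $\{Q'-y\}$, $|y|\le\delta$, remain admissible in $\Omega$) give the estimate. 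The second property is a \emph{uniform-in-$\eps$ continuity}: for smooth $v,w$ and any open $A$ one has $|K_\eps(v,m,A)-K_\eps(w,m,A)|\le K_\eps(v-w,m,A)\le C(n,m)\,|A|\,\|\nabla^m(v-w)\|_{L^\infty(A)}$, which comes from the Bramble--Hilbert bound $|g-P^{m-1}_{Q'}[g]|\le C\eps^m\|\nabla^m g\|_{L^\infty(Q')}$ (implicit in \cite{GS}) together with the fact that an admissible family has at most $C_n|A|\eps^{-n}$ cubes.

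For the $\Gamma$-$\liminf$ inequality I would take $u_\eps\to u$ in $L^1(\Omega)$ with $L:=\liminf_\eps K_\eps(u_\eps,m,\Omega)<+\infty$ (otherwise nothing to prove) and pass to a subsequence realizing $L$ with bounded values. Fix $\delta>0$ and $\Omega'\Subset\Omega$ with $\Omega'\subset\Omega_\delta$. Since $u_\eps\to u$ in $L^1$, mollification gives $u_\eps*\rho_\delta\to u*\rho_\delta$ in $C^m(\overline{\Omega'})$, so the uniform continuity property and \eqref{theoGS} applied to the \emph{fixed} smooth function $u*\rho_\delta$ give $\lim_\eps K_\eps(u_\eps*\rho_\delta,m,\Omega')=\beta\int_{\Omega'}|\nabla^m(u*\rho_\delta)|\,dx$. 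Combining with the mollification estimate $K_\eps(u_\eps*\rho_\delta,m,\Omega')\le K_\eps(u_\eps,m,\Omega)$ yields $\beta\int_{\Omega'}|\nabla^m(u*\rho_\delta)|\,dx\le L$; letting $\delta\to0$ and using the $L^1$-lower semicontinuity of $v\mapsto|D^m v|(\Omega')$, then taking the supremum over $\Omega'\Subset\Omega$, gives $\beta|D^m u|(\Omega)\le L$. In particular $|D^m u|(\Omega)<\infty$, whence $u\in BV^m(\Omega)$ (by an iterated Poincar\'e argument, equivalently the characterization \eqref{charBVm}); the contrapositive handles $u\notin BV^m(\Omega)$, where the $\liminf$ must be $+\infty$.

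For the $\Gamma$-$\limsup$ inequality I would argue by lower semicontinuity and strict density. The functional $K''(u):=\Gamma\text{-}\limsup_\eps K_\eps(u,m,\Omega)$ is automatically $L^1$-lower semicontinuous. On a smooth $v$ the constant sequence and \eqref{theoGS} give $K''(v)\le\beta\int_\Omega|\nabla^m v|\,dx=\beta|D^m v|(\Omega)$. Given $u\in BV^m(\Omega)$, I would pick smooth $u_k\to u$ in $L^1$ with $|D^m u_k|(\Omega)\to|D^m u|(\Omega)$ (strict approximation in $BV^m$, cf. \cite{DM}); lower semicontinuity then gives $K''(u)\le\liminf_k K''(u_k)\le\beta\liminf_k|D^m u_k|(\Omega)=\beta|D^m u|(\Omega)$, the required recovery bound.

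The main obstacle is the $\Gamma$-$\liminf$ inequality, precisely because it forces a controlled interchange of the two independent scales $\delta$ (mollification) and $\eps$ (functional). The scheme above resolves this in a fixed order---mollify to reduce the oscillation of $u_\eps$, use the uniform-in-$\eps$ continuity to replace $u_\eps*\rho_\delta$ by the fixed limit $u*\rho_\delta$, invoke \eqref{theoGS} with $\eps\to0$, and only afterwards send $\delta\to0$---and this ordering is essential, since \eqref{theoGS} is available only for a fixed function. The remaining care is bookkeeping: the losses from $\Omega$ to $\Omega_\delta$ to $\Omega'$ are recovered by inner regularity of $|D^m u|$, and one must confirm that the specific averaged polynomial $P^{m-1}_{Q'}$ of \eqref{poly} does satisfy the Bramble--Hilbert estimate underlying the uniform continuity bound.
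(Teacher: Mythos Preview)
Your proposal is correct and follows essentially the same route as the paper: the $\Gamma$-$\liminf$ inequality is obtained by combining the mollification estimate $K_\eps(u_\eps*\rho_\delta,m,A)\le K_\eps(u_\eps,m,\Omega)$ (your first property, the paper's \eqref{Kconvol}) with a uniform-in-$\eps$ continuity bound to replace $u_\eps*\rho_\delta$ by $u*\rho_\delta$, and then invoking \eqref{theoGS}; the $\Gamma$-$\limsup$ is obtained from strict approximation in $BV^m$ together with \eqref{theoGS}. The only cosmetic differences are that the paper uses the $L^1$ Poincar\'e inequality \eqref{BVpoinc} (yielding $|K_\eps(u_\eps*\rho_\sigma,m,A)-K_\eps(u*\rho_\sigma,m,A)|\le C\|\nabla^m\rho_\sigma\|_{L^1}\|u_\eps-u\|_{L^1}$) in place of your $L^\infty$ Bramble--Hilbert bound, and it builds the recovery sequence by an explicit diagonal argument rather than invoking the lower semicontinuity of the $\Gamma$-$\limsup$.
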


When $m=1$ this result was established in \cite{ARB}. The used techniques are different; their approach is based on piece-wise constant
approximations rather than convolutions and our method makes the demonstration significantly shorter.

In the last section of the paper, motivated by applications in Image Processing, we deal with the minimization of functionals of the form
\[F_\eps(u,\Omega)= \Lambda \int_\Omega |f-u|^q + K_\eps(u,1,\Omega), \qquad q\geq 1
\]
i.e. functionals that are the sum of a fidelity part and of a regularization term. Here for the sake of brevity, we denote $K_\eps(u,\Omega)=K_\eps(u,1,\Omega)$. We investigate the existence of a minimizer when $\eps$ is fixed and we analyze their behavior as $\eps \to 0$. The proof of the existence of minimizer is based on the $\Gamma -$convergence result (Theorem \ref{gammaK}). Precisely, we prove

\begin{theo}\label{mainappl}
	Let $q>1$. For every $\eps >0$, there exists a unique $u_\eps\in L^q(\Omega)$ such that
	\[
	F_\eps(u_\eps,\Omega)=\min_{u\in L^q(\Omega)} F_\eps(u,\Omega)
	\]
	Let $u_0$ the unique minimizer of $F$ in $L^q(\Omega)\cap BV(\Omega)$, then as $\eps\to 0$ we have that
	\[
	u_\eps \to u_0, \qquad \text{in } L^q(\Omega)
	\]
	and
	\[
	F_\eps(u_\eps,\Omega) \to F(u_0,\Omega),
	\]
 where 
\[
F(u_0,\Omega)=\frac 1 4 |Du_0|(\Omega)  + \Lambda\int_\Omega |f-u_0|^q.
\]
\end{theo}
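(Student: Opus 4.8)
The plan is to prove Theorem \ref{mainappl} in three stages: existence/uniqueness of $u_\eps$ for fixed $\eps$, convergence of minimizers, and convergence of minimal energies. The main tool throughout is the $\Gamma$-convergence result of Theorem \ref{gammaK} (in the case $m=1$), combined with the standard machinery connecting $\Gamma$-convergence to convergence of minimizers (De Giorgi's fundamental theorem of $\Gamma$-convergence).

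\smallskip

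For the existence and uniqueness of $u_\eps$ with $\eps>0$ fixed, I would argue by the direct method. First I observe that $F_\eps(\cdot,\Omega)$ is coercive in $L^q(\Omega)$: since the fidelity term $\Lambda\int_\Omega|f-u|^q$ controls the $L^q$-norm of $u$, any minimizing sequence is bounded in $L^q(\Omega)$, hence admits a weakly convergent subsequence. The delicate point is lower semicontinuity of $K_\eps(\cdot,\Omega)$ with respect to this convergence. Here I would exploit the structure of $K_\eps$ as a supremum over families $\mathcal{G}_\eps$ of the functionals
\[
u\mapsto \eps^{n-1}\sum_{Q'\in\mathcal{G}_\eps}\fint_{Q'}\Big|u(x)-\fint_{Q'}u\Big|\,dx,
\]
each of which is convex and continuous under $L^1$-convergence on the (finitely many, for fixed $\eps$) relevant cubes; a supremum of lower semicontinuous convex functionals is lower semicontinuous and convex. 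Strict convexity of the fidelity term for $q>1$ then upgrades existence to uniqueness of $u_\eps$.

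\smallskip

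For the convergence statement, the key step is to show that $F_\eps$ $\Gamma$-converges in $L^q(\Omega)$ to
\[
F(u,\Omega)=\tfrac14|Du|(\Omega)+\Lambda\int_\Omega|f-u|^q
\]
on $L^q(\Omega)\cap BV(\Omega)$, with value $+\infty$ otherwise. Since the fidelity term $\Lambda\int_\Omega|f-u|^q$ is continuous with respect to $L^q$-convergence, it is a continuous perturbation and passes to the $\Gamma$-limit unchanged; thus the $\Gamma$-limit of $F_\eps$ is the sum of this term and the $\Gamma$-limit of $K_\eps(\cdot,1,\Omega)$, which by Theorem \ref{gammaK} equals $\beta(1,n)|Du|(\Omega)$. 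One then checks that $\beta(1,n)=\tfrac14$ (this is the isotropic one-dimensional-profile constant appearing already in \eqref{theoFMS}), yielding the claimed form of $F$. A subtlety is that Theorem \ref{gammaK} is stated for $\Gamma$-convergence in $L^1$, while here we work in $L^q$; I would reconcile this using the equicoercivity in $L^q$ provided by the fidelity term, which forces all the relevant competitors to lie in a fixed bounded, hence $L^q$-equiintegrable, set where $L^1$- and $L^q$-convergence of the minimizing configurations agree along subsequences.

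\smallskip

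Finally, I would invoke the fundamental property of $\Gamma$-convergence: if $F_\eps\xrightarrow{\Gamma}F$ and the family $(F_\eps)$ is equicoercive, then the minimizers $u_\eps$ converge (up to subsequences) to a minimizer $u_0$ of $F$, and $\min F_\eps\to\min F$. Equicoercivity again follows from the uniform control of the $L^q$-norm by the fidelity term. Uniqueness of the limit minimizer $u_0$ (guaranteed by strict convexity of $F$ for $q>1$) removes the need to pass to subsequences and gives convergence of the whole family $u_\eps\to u_0$ in $L^q(\Omega)$, together with $F_\eps(u_\eps,\Omega)\to F(u_0,\Omega)$. The main obstacle I anticipate is the careful justification of the lower-semicontinuity/recovery-sequence interplay in the $L^q$ versus $L^1$ topology and the explicit identification of the constant $\beta(1,n)=\tfrac14$; once these are settled, the convergence of minimizers and energies is a formal consequence of the abstract $\Gamma$-convergence theory.
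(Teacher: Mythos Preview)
Your existence/uniqueness argument for fixed $\eps$ is fine and essentially matches the paper's. The problem is in the convergence part, where you try to reduce everything to abstract $\Gamma$-convergence plus a ``continuous perturbation''. This does not go through, for two distinct reasons tied to the $L^1$/$L^q$ mismatch that you flag but do not actually resolve.

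\emph{Limsup side.} The recovery sequence $v_\eps\to u_0$ produced by Theorem \ref{gammaK} converges only in $L^1(\Omega)$; nothing guarantees $v_\eps\in L^q(\Omega)$, let alone $\int_\Omega|f-v_\eps|^q\to\int_\Omega|f-u_0|^q$. So the fidelity term is \emph{not} a continuous perturbation along this sequence, and the competitor bound $F_\eps(u_\eps)\le F_\eps(v_\eps)$ is useless as written. The paper repairs this with an extra ingredient: it truncates the recovery sequence and invokes Proposition \ref{trunc_lemma} to get $K_\eps(T_\tau v_\eps)\le K_\eps(v_\eps)$, so that the truncated sequence is both admissible in $L^q$ and still a recovery sequence for $K_\eps$; one then lets $\tau\to\infty$. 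Your proposal omits this step.

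\emph{Liminf side.} Equicoercivity in $L^q$ gives, a priori, only weak $L^q$ convergence of a subsequence $u_{\eps_k}\rightharpoonup v$; this does \emph{not} imply strong $L^1$ convergence, so the $\Gamma$-liminf of Theorem \ref{gammaK} cannot be applied directly. Your claim that boundedness in $L^q$ makes ``$L^1$- and $L^q$-convergence agree along subsequences'' is not correct without additional compactness. The paper handles this by proving a genuinely stronger liminf inequality under \emph{weak} $L^1$ convergence (Step 2), obtained via the convolution estimate \eqref{Kconvol}: mollification sends weak convergence of $w_\eps$ to strong convergence of $\rho_\sigma*w_\eps$, where the $L^1$ $\Gamma$-liminf applies. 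Only after this does one know $v=u_0$, and the paper then upgrades weak to strong $L^q$ convergence by showing $\limsup\int|f-u_{\eps_k}|^q\le\int|f-u_0|^q$.

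In short, the abstract $\Gamma$-convergence/continuous-perturbation package you invoke requires the $\Gamma$-convergence and the continuity of the perturbation to hold in the \emph{same} topology, and here they do not; the paper's truncation and convolution arguments are precisely what bridge this gap.
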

In the case $q=1$ one can not apply the previous Theorem but one can always consider almost minimizers; a slight generalization is given in Theorem \ref{quasimin}.

\section{Preliminaries}

We collect some preliminary results and properties of the functionals $H_\eps$ and $K_\eps$ useful in the next sections.\par
Here and in the rest of the paper $\Omega$ will be an open set in $\R^n$. We denote by $Du$ the gradient measure of $u$ and by $\nabla u$ its absolutely continuous part.

We recall the definition of $BV$ functions. A function $u\in L^1(\Omega)$ is said to have bounded variation, $u\in BV(\Omega)$, if
$$
|Du|(\Omega):= \sup\left\{\int_\O u \text{ div} \phi \,dx: \phi \in C^1_0(\Omega, \mathbb R^n), \| \phi\|_{\infty}\leq 1\right\}<+ \infty.
$$
$BV$ is a Banach space endowed with the norm
$$
\| u\|_{BV}: = \|u \|_{L^1(\O)} + |Du|(\Omega).
$$
Smooth functions are not dense in $BV(\Omega)$, but every function $u\in BV(\Omega)$ is approximable in a weak sense (see Theorem 3.9 in \cite{AFP}); that is, there exists a sequence $(u_h)_h$ of $C^{\infty}(\Omega)$ functions such that
\begin{equation}\label{star}
\begin{cases}
    \|u_h- u\|_{L^1(\Omega)} \to 0\\
    \\
    \int_{\Omega} |\nabla u_h| \to |Du|(\O).
\end{cases}
\end{equation}
\subsection{Properties of the functional $H_\eps$}
Given two functions $u, v\in L^1(\Omega)$, the following properties hold true:
\begin{itemize}
	\item an estimate from above\be\label{upperdiff}|H_\eps(u, \Omega)- H_\eps(v, \Omega)|\leq H_\eps(u-v, \Omega);\ee
	\item convexity with respect to the function: for any $\lambda\in [0,1]$, we have $$H_\eps(\lambda u+ (1-\lambda) v, \Omega) \leq \lambda H_\eps(u, \Omega)+(1-\lambda) H_\eps (v, \Omega). $$
\end{itemize}

\begin{lemm}\label{lemmaconvol}
   For every open set $A\subset \subset \Omega$, let $0<\sigma <\text{dist}(A, \partial \Omega)$, then
\begin{equation}\label{convol}
H_\eps (\rho_{\sigma}* u, A)\leq H_\eps ( u, \Omega) \qquad \forall u\in L^1(\Omega),\end{equation}
where $\rho_\sigma(x)= \sigma^{-n} \rho \left(\frac{x}{\sigma} \right)$ and $\rho$ is a standard mollifier with compact support in the unit ball $B$.
\end{lemm}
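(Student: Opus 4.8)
The plan is to exploit the fact that convolution is averaging against the probability measure $\rho_\sigma(y)\,dy$, together with the invariance of the admissible families under translation. Write $v=\rho_\sigma*u$ and fix an arbitrary admissible family $\mathcal{H}_\eps$ of pairwise disjoint translates $D'$ of $\eps D$ contained in $A$. First I would compute the average of $v$ over each $D'$: since $u\in L^1$ and $\rho_\sigma$ is bounded with compact support, Fubini gives $\fint_{D'}v = \int \rho_\sigma(y)\,\big(\fint_{D'} u(x-y)\,dx\big)\,dy = \int \rho_\sigma(y)\,\fint_{D'-y} u\, dy$, where $D'-y=\{x-y:x\in D'\}$ and the last equality is the change of variables $z=x-y$ (recall $|D'-y|=|D'|$). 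Consequently $v(x)-\fint_{D'}v = \int \rho_\sigma(y)\,\big(u(x-y)-\fint_{D'-y}u\big)\,dy$, so by Jensen's inequality applied to $|\cdot|$ with respect to the probability density $\rho_\sigma$, $\big|v(x)-\fint_{D'}v\big|\le \int \rho_\sigma(y)\,\big|u(x-y)-\fint_{D'-y}u\big|\,dy$.

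Next I would average this pointwise bound over $D'$, interchange the order of integration (legitimate by Tonelli, all integrands being nonnegative), and change variables $z=x-y$ inside to obtain $\fint_{D'}\big|v(x)-\fint_{D'}v\big|\,dx \le \int \rho_\sigma(y)\,\big(\fint_{D'-y}\big|u(z)-\fint_{D'-y}u\big|\,dz\big)\,dy$. Summing over $D'\in\mathcal{H}_\eps$ and multiplying by $\eps^{n-1}$, and again interchanging the (finite) sum with the integral, yields $\eps^{n-1}\sum_{D'}\fint_{D'}\big|v-\fint_{D'}v\big| \le \int \rho_\sigma(y)\,\Big(\eps^{n-1}\sum_{D'}\fint_{D'-y}\big|u-\fint_{D'-y}u\big|\Big)\,dy$.

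The crucial step is to recognize that, for each fixed $y$ in the support of $\rho_\sigma$, the translated family $\{D'-y : D'\in\mathcal{H}_\eps\}$ is itself admissible for $H_\eps(u,\Omega)$. Translation is a bijection, so disjointness is preserved and each $D'-y$ is again a translate of $\eps D$; the only point to check is containment in $\Omega$, and this is exactly where the hypothesis $\sigma<\dist(A,\partial\Omega)$ is used. Since $\rho$ is supported in the unit ball, $\rho_\sigma$ is supported in $\{|y|\le\sigma\}$, and for such $y$ every point of $D'-y$ (with $D'\subset A$) lies within distance $\sigma<\dist(A,\partial\Omega)$ of $A$, hence in $\Omega$; that is, the $\sigma$-neighbourhood of $A$ is contained in $\Omega$. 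Therefore the inner bracket is bounded by $H_\eps(u,\Omega)$ for a.e. $y$ in the support of $\rho_\sigma$.

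Plugging this uniform bound back in and using $\int\rho_\sigma=1$ gives $\eps^{n-1}\sum_{D'}\fint_{D'}\big|v-\fint_{D'}v\big| \le H_\eps(u,\Omega)$, and since the admissible family for $A$ was arbitrary, taking the supremum produces the claim $H_\eps(\rho_\sigma*u,A)\le H_\eps(u,\Omega)$. The only genuine obstacle is the bookkeeping around the translation: one must verify that the translates $D'-y$ remain inside $\Omega$ (the role of the distance hypothesis) and that the interchanges of summation and integration are justified. The former is the standard neighbourhood argument above; the latter follows from nonnegativity via Tonelli, and in fact $\mathcal{H}_\eps$ is a finite family, as $A\subset\subset\Omega$ is bounded and the $D'$ are disjoint sets of equal positive measure $\eps^n|D|$.
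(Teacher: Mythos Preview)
Your proof is correct and follows essentially the same approach as the paper: write the convolution as an average against the probability measure $\rho_\sigma(y)\,dy$, apply Jensen's inequality and Fubini/Tonelli to push the modulus and averages inside, and then observe that for each $y$ in the support of $\rho_\sigma$ the translated family $\{D'-y\}$ is admissible for $H_\eps(u,\Omega)$ thanks to the hypothesis $\sigma<\dist(A,\partial\Omega)$. Your write-up is in fact slightly more explicit than the paper's about the justification of the interchanges and the containment $D'-y\subset\Omega$.
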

 \begin{proof}
By fixing an open set $A\subset \subset \Omega$ and $0<\sigma <\text{dist}(A, \partial \Omega)$, for all $x\in A$, we set $u_\sigma(x):= (\rho_{\sigma}*u)(x)$. Thus, given a family $\mathcal H_\eps$ of pairwise disjoint sets $D'$ translations of $\eps D$ contained in $A$, using the definition of $u_\sigma$, Jensen inequality and Fubini's theorem we have that, recalling that $\int_B \rho(y)\, dy=1$,
\begin{equation*}
\begin{split}
\eps^{n-1} \sum_{D'\in\mathcal H_\eps} & \fint_{D'} \left|u_{\sigma}(x)-\fint_{D'}u_{\sigma} \right|\, dx\\
&=
\eps^{n-1} \sum_{D'\in\mathcal H_\eps}  \fint_{D'} \left| \int_B \rho(y) u(x-\sigma y)\,dy -\fint_{D'}\int_{B} \rho(y) u(z- \sigma y)\, dy dz\right|\, dx\\
&\leq
\eps^{n-1} \int_{B} \rho(y) \left(\sum_{D'\in\mathcal H_\eps}  \fint_{D'} \Big|u(x-\sigma y)\,dy -\fint_{D'} u(z- \sigma y) dz\Big|\, dx\right) \, dy\\
&=
\eps^{n-1} \int_{B} \rho(y) \left(\sum_{D'\in\mathcal H_\eps}  \fint_{D'-\sigma y} \Big|u(x) -\fint_{D'-\sigma y} u\Big|\, dx\right)\, dy\leq H_{\eps} (u, \Omega).
\end{split}
\end{equation*}
Taking the supremum over all families $\mathcal H_\eps$, we get \eqref{convol}.

\end{proof}

In the particular case that $u$ is the linear function, we set for $\nu\in \mathbb S^{n-1}$, 
$$
\psi (\nu):= H(x\cdot \nu, Q)
$$
where $Q= \left(-\frac{1}{2},\frac{1}{2} \right)^{n}$ is the unit cube and $H(x\cdot \nu, Q)=\lim_{\eps \to 0} H_\eps(x\cdot \nu, Q) $. The function $\psi$ is well defined  as showed in Section 3.2. of \cite{FFGS}. Moreover, the function $\psi$ is Lipschitz continuous, bounded away from zero and convex (see Propositions 3.4 and 3.6 of \cite{FFGS}). With a slightly abuse of notion, we shall denote by $\psi$ the $1$- homogeneous extension $\tilde{\psi}$ of $\psi$ to $\mathbb R^n$, defined in the following way:
$$
\tilde{\psi}(0)=0
$$
and
$$\tilde{\psi}(\tau)= |\tau|\psi\left( \frac{\tau}{|\tau|} \right) \qquad \forall \tau \in \mathbb R^{n}\setminus \{0\}.$$

Clearly, the $1$-homogeneous extension of $\psi$ is Lipschitz and there exists a positive constant $c$ such that $$\psi(\tau)\geq c|\tau|\qquad \forall \tau \in \mathbb R^{n}\setminus \{0\}.$$  Moreover, if $u\in C^\infty(\Omega)$ there exist two positive constants $C_1$, $C_2$ such that
\be\label{equivpsi}
C_1\int_{\Omega} |\nabla u(x)| \, dx \leq  \int_{\Omega} \psi(\nabla u(x)) \, dx\leq C_2 \int_{\Omega} |\nabla u(x)|\, dx.
\ee

We summarize here some results of \cite{FFGS}:

\begin{theo}\label{w11lim1}
Let $D$ be a bounded connected open set with Lipschitz boundary. Then
\begin{itemize}
\item[a)] %\cite[Theorem 2.1]{FFGS} 
if $u\in W^{1,1}(\Omega)$, then 
\begin{equation}
\lim_{\eps \to 0} H_\eps(u,\Omega)=\int_{\Omega} \psi (\nabla u)\, dx\,
\end{equation}
\item[b)] %\cite[Corollary 4.2]{FFGS}
if $u\in L^1(\Omega)$, then $u \in BV(\Omega)$ if, and only if, 
\begin{equation}\label{charBV}
 \liminf_{\eps \to 0} H_{\eps} (u, \Omega) <+\infty
\end{equation}
\end{itemize}

 \end{theo}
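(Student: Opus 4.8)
The plan is to derive both statements from the scaling behaviour of $H_\eps$ on affine functions, combined with the stability estimates \eqref{upperdiff} and \eqref{Poinc}, the convolution inequality \eqref{convol}, and the fact (from \cite{FFGS}) that the limit on linear functions on the unit cube exists and defines $\psi$. For part a) I would first reduce to smooth data: since $C^\infty(\Omega)\cap W^{1,1}(\Omega)$ is dense in $W^{1,1}(\Omega)$, pick $u_k\to u$ in $W^{1,1}$; by \eqref{upperdiff} and \eqref{Poinc} one has $|H_\eps(u,\Omega)-H_\eps(u_k,\Omega)|\le H_\eps(u-u_k,\Omega)\le C\int_\Omega|\nabla u-\nabla u_k|$ uniformly in $\eps$, while $\int_\Omega\psi(\nabla u_k)\to\int_\Omega\psi(\nabla u)$ by the Lipschitz bound \eqref{equivpsi}, so the two limits may be interchanged and it suffices to treat smooth $u$. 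The core computation is the affine case: for $\ell(x)=a\cdot x+b$ the change of variables $x=x_0+\eps z$ gives $\fint_{D'}|\ell-\fint_{D'}\ell|=\eps\,m(a)$ on every $D'=x_0+\eps D$, where $m(a)=\fint_D|a\cdot(z-\fint_D w\,dw)|\,dz$ is $1$-homogeneous in $a$; hence $H_\eps(\ell,Q')=\eps^n m(a)\cdot\#\mathcal{H}_\eps^{\max}$, and letting $\eps\to0$, using the existence of the limit for linear functions together with the $1$-homogeneity of $m$ and the scaling of the packing number, yields $\lim_\eps H_\eps(\ell,Q')=\psi(a)\,|Q'|$.

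With the affine case in hand I would localize. Partition $\Omega$ into cubes $Q_i$ of side $\delta$ and set $\ell_i(x)=u(x_i)+\nabla u(x_i)\cdot(x-x_i)$. For the lower bound, optimal local families in the $Q_i$ are pairwise disjoint, hence admissible as a single global family, giving the superadditivity $H_\eps(u,\Omega)\ge\sum_i H_\eps(u,Q_i)$; on each cube \eqref{upperdiff} gives $H_\eps(u,Q_i)\ge H_\eps(\ell_i,Q_i)-H_\eps(u-\ell_i,Q_i)$, and $H_\eps(u-\ell_i,Q_i)\le C\int_{Q_i}|\nabla u-\nabla u(x_i)|$ by \eqref{Poinc}. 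For the upper bound one argues symmetrically, discarding from a global optimizing family the sets that meet the grid $\bigcup_i\partial Q_i$; these lie in a boundary layer of small measure, so \eqref{Poinc} controls their total contribution, while the remaining sets split among the $Q_i$. Passing to the limit $\eps\to0$ via the affine case and then $\delta\to0$ converts $\sum_i\psi(\nabla u(x_i))|Q_i|$ into the Riemann sum for $\int_\Omega\psi(\nabla u)$, the modulus of continuity of $\nabla u$ killing the error terms.

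Part b) is then short. If $u\in BV(\Omega)$ then \eqref{Poinc} gives $H_\eps(u,\Omega)\le C|Du|(\Omega)<+\infty$, so the $\liminf$ is finite. Conversely, assume $\liminf_\eps H_\eps(u,\Omega)<+\infty$ and fix $A\subset\subset\Omega$. For $0<\sigma<\dist(A,\partial\Omega)$ the mollification $u_\sigma=\rho_\sigma*u$ is smooth, so combining \eqref{convol}, part a), and the coercivity $\psi(\tau)\ge c|\tau|$ gives $c\int_A|\nabla u_\sigma|\le\int_A\psi(\nabla u_\sigma)=\lim_\eps H_\eps(u_\sigma,A)\le\liminf_\eps H_\eps(u,\Omega)$, a bound independent of $\sigma$. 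Since $u_\sigma\to u$ in $L^1$, lower semicontinuity of the total variation yields $|Du|(A)\le c^{-1}\liminf_\eps H_\eps(u,\Omega)$, and letting $A\uparrow\Omega$ proves $u\in BV(\Omega)$.

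I expect the main obstacle to be the two-sided matching in part a): establishing that the affine limit on a subcube equals exactly $\psi(\nabla\ell)\,|Q'|$ (which rests on the existence of the limit for linear functions, the defining property of $\psi$ borrowed from \cite{FFGS}), and, above all, controlling in the upper bound the sets of the optimizing family that straddle the partition grid, so that the global supremum is genuinely recovered by summing the local contributions rather than merely bounded below by them.
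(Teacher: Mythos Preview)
The paper does not give its own proof of this theorem: it is explicitly introduced with ``We summarize here some results of \cite{FFGS}'' and no argument follows. So there is no in-paper proof to compare against; the result is imported wholesale from \cite{FFGS}.

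That said, your sketch is a faithful reconstruction of the expected strategy and the ingredients are the right ones. The reduction to smooth $u$ via \eqref{upperdiff}--\eqref{Poinc}, the exact computation on affine functions (noting that $\fint_{D'}|\ell-\fint_{D'}\ell|=\eps\,m(a)$ reduces the question to the asymptotics of the packing number, whose limit is precisely what makes $\psi(\nu)=\lim_\eps H_\eps(x\cdot\nu,Q)$ well defined in \cite{FFGS}), and the scaling $\lim_\eps H_\eps(\ell,Q')=\psi(a)\,|Q'|$ are all correct. The localization via a $\delta$-grid, with superadditivity for the lower bound and the boundary-layer estimate for the upper bound, is standard; since you have already passed to $u\in C^\infty$, the contribution of sets straddling $\bigcup_i\partial Q_i$ is controlled by $\|\nabla u\|_\infty$ times the measure of an $O(\eps)$-neighborhood of the grid, which vanishes as $\eps\to0$ for fixed $\delta$. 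Your argument for part b) is also correct and is exactly how the present paper itself argues the analogous converse in Proposition~\ref{propHO} (mollify, apply part a) on $A\subset\subset\Omega$, use \eqref{convol} and the coercivity $\psi(\tau)\ge c|\tau|$, then let $\sigma\to0$ and $A\uparrow\Omega$).

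The only point where you should be careful is the one you yourself flag: in the upper bound, the near-optimal global family is not literally a disjoint union of local families, so the inequality $H_\eps(u,\Omega)\le\sum_iH_\eps(u,Q_i)+(\text{boundary terms})$ must be written for an $\eta$-almost-optimal family and one lets $\eta\to0$ at the end. This is routine but should be made explicit.
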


We introduce on $BV$ an anisotropic norm equivalent to the total variation. 
%A function $f\in L^1(\Omega)$ is said to be in the anisotropic $BV^\psi$ if there exists locally Lipschitz functions $f_n$ converging to $f$ in $L^1(\Omega)$ and with equibounded energies, i.e. $\sup_n \int_\Omega \psi(\nabla f_n)<+\infty$. 
We define the anisotropic variation $\Psi(Du)$ defined on open sets $A\subseteq \Omega$ by
\begin{equation}\label{defPsi}
\Psi(Du)(A)= \inf \left\{  \liminf_{h\to\infty} \int_A \psi(\nabla u_h)\, :\, u_h\in C^\infty(A), u_h\to u \text{ in } L^1(A ) \right\}.
\end{equation}
Moreover, Reshetnyak continuity theorem yields
\begin{equation}\label{Resh}
\Psi(Du)(A)=\int_A \psi\left(\frac{d Du}{d|Du|} \right)\, d|Du|.
\end{equation}

Clearly, this definition with $\psi=|\cdot|$ coincides with the usual $BV$ total variation (see Theorem 3.9. in \cite{AFP}). 
\par
For any $u\in BV$, from \eqref{Resh} and by \eqref{star}, \eqref{equivpsi} we get that there exists two positive constants $C_1$, $C_2$ such that
\begin{equation} \label{equivnorm}
C_1 |Du|(\Omega) \leq  \Psi (D u)(\Omega) \leq C_2 |Du|(\Omega).
\end{equation}

%\begin{prop}
%	Let $D$ be a bounded connected open set with Lipschitz boundary and let $f\in L^1(\Omega)$. $f \in BV(\Omega)$ if, and only if, 
%	\begin{equation}
%	H_{-}(f, \Omega) = \liminf_{\epsilon \to 0} H_{\epsilon} (f, \Omega) <\infty
%	\end{equation}
%	and in this case, there exist two positive constants $C_1$, $C_2$ such that
%	$$
%	C_1\int_{\Omega} \psi(\nabla f(x)) \, dx \leq  H_{-}(f, \Omega) \leq  H_{+}(f, \Omega)\leq C_2 \int_{\Omega} \psi(\nabla f(x)).
%	$$
%	
%\end{prop}
%\begin{proof}
%	By \cite[Corollary 4.2]{FFGS} 
%\end{proof}
%

\subsection{Properties of the functional $K_\eps$}

First we describe the properties of the polynomial that appears in \eqref{poly}. \par
For every $u\in L^1(\Omega)$, $\lambda, \mu \in \R$, and $Q^\prime=x_0+\eps Q$ the following properties hold true:
\begin{itemize}
	\item linearity: \[ \lambda P^{m-1}_{Q^\prime}[u](x)+ \eta P^{m-1}_{Q^\prime}[v](x)= P^{m-1}_{Q^\prime}[\lambda u+ \eta v](x);\]

	\item scaling: \[P^{m-1}_{\eps Q^\prime}[u](\eps x) = P^{m-1}_{Q^\prime}[u_\eps](x),\]
	where $u_\eps(x):=u(\eps x)$.
	\item convolution: for every open set $U$ compactly contained in $\O$ such that $\supp_\O u \subseteq U \subset\subset \O$ and for every $\sigma>0$, we consider  $u_\sigma=\rho_\sigma*u$ with $\rho$ a standard mollifier with compact support in the unit ball $B$ and $\rho_\sigma(x)=\sigma^{-n}\rho(x/\sigma)$. We choose $\sigma < \dist (\supp_\O u, \partial U)$ so that $\supp_\O u_\sigma \subset U$.  We have
	\begin{equation}\label{convP}
	P^{m-1}_{Q^\prime}[u_\sigma](x) = P^{m-1}_{Q^\prime}[\rho_\sigma*u](x) = \int_{B}\rho(y)P^{m-1}_{Q^\prime-\sigma y}[u](x) \,dy.
	\end{equation}
\end{itemize}

We recall that   
\[
BV^m(\O)=\{u\in W^{m-1,1}(\O),\:\, D^{m-1}u\in BV(\O, S^{m-1}(\R^n))  \}
\]
is the space of (real valued) functions of m-th order bounded variation, i.e. the set of all functions, whose distributional gradients up to order $m-1$ are represented through $1$-integrable tensor-valued functions and whose $m$-th distributional gradient is a tensor-valued Radon measure of finite total variation. Here $S^k(\R^n)$ denotes the set of all symmetric tensors of order $k$ with real components, which is naturally isomorphic to the set of all $k$-linear symmetric maps $(\R^n)^k \to \R$ (see \cite{DT}).

The space $BV^m(\Omega)$ becomes a Banach space with the norm 
\[
\|u\|_{BV^m(\O)} = \|u\|_{W^{m-1,1}(\O)} + |D^m u|(\O).
\]
Here the total variation of $D^{m-1}u$ is denoted by $|D^m u|(\O)$ and defined by
\[
|D^m u|(\O) = \sup \left( \sum_{\alpha_1, \dots, \alpha_m=1}^n \int_\O D_{\alpha_1, \dots, \alpha_{m-1}} u \cdot \partial_{\alpha_m}\varphi_{\alpha_1, \dots, \alpha_m} \, dx\right),
\]
where the supremum is taken over all $\varphi\in C^1_0(\O,\R^n)$ with $\|\varphi\|_\infty=1$. Obviously, $W^{m,1}(\Omega)$ is a subspace of $BV^m(\Omega)$. 

The definition of $BV^m$ generalizes that of the classical space of functions of bounded variation and many results about $BV$ can be obtained in $BV^m$ similarly (see \cite{JS}). We recall a higher-order variant of the famous Poincaré inequality, which will be useful throughout the sequel:
\begin{theo}[Poincarè inequality in $BV^m$ {\cite[Lemma 2.2]{FM}}]\label{PoincBVm}
	Let $\O\subset \R^n$ be an open and bounded subset with Lipschitz boundary, $m\in \mathbb{N}$, $1\leq p <\infty$. Then there exist a constant $C>0$, depending only on $\O$, $m$ and $n$ such that for all $u\in BV^m(\O)$
	\[
	\|u\|_{BV^m(\O)} \leq C |D^m u|(\O).
	\]
\end{theo}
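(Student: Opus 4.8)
The plan is to prove the equivalent estimate $\|u\|_{W^{m-1,1}(\O)} \leq C\,|D^m u|(\O)$, from which the assertion follows by adding $|D^m u|(\O)$ to both sides. Since the kernel of the seminorm $u \mapsto |D^m u|(\O)$ is exactly the space $\mathcal{P}_{m-1}$ of polynomials of degree at most $m-1$ (a function with $D^m u = 0$ being a polynomial on the, by hypothesis, connected Lipschitz domain $\O$), such an inequality can hold only after this finite-dimensional kernel has been factored out; accordingly I read the statement with the normalization that is implicit in the quotient structure used in \cite{FM}, namely that the projection of $u$ onto $\mathcal{P}_{m-1}$ vanishes.

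The cleanest route is a compactness–contradiction argument. Suppose the inequality fails: then there is a sequence $(u_k) \subset BV^m(\O)$, each normalized so that its $\mathcal{P}_{m-1}$-component is zero, with $\|u_k\|_{BV^m(\O)} = 1$ but $|D^m u_k|(\O) \to 0$. First I would establish the compact embedding $BV^m(\O) \hookrightarrow\hookrightarrow W^{m-1,1}(\O)$: since $D^{m-1}u_k \in BV(\O)$ with $|D(D^{m-1}u_k)|(\O) = |D^m u_k|(\O)$ bounded, the classical compactness of $BV(\O) \hookrightarrow\hookrightarrow L^1(\O)$ on bounded Lipschitz domains yields, up to a subsequence, $D^{m-1}u_k \to w$ in $L^1$; an iteration down the scale of derivatives, controlling $D^{j}u_k$ from $D^{j+1}u_k$ via the first-order Poincaré–Wirtinger inequality (this is where the Lipschitz regularity of $\partial\O$ enters), upgrades this to convergence $u_k \to u$ in $W^{m-1,1}(\O)$.

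By lower semicontinuity of the total variation with respect to $L^1$ convergence, $|D^m u|(\O) \leq \liminf_k |D^m u_k|(\O) = 0$, so $D^m u = 0$ and $u \in \mathcal{P}_{m-1}$. On the other hand the normalization passes to the limit, forcing the $\mathcal{P}_{m-1}$-component of $u$ to vanish, hence $u \equiv 0$. But then $\|u_k\|_{W^{m-1,1}} \to 0$, and since $|D^m u_k|(\O) \to 0$ as well we would get $\|u_k\|_{BV^m} \to 0$, contradicting $\|u_k\|_{BV^m} = 1$.

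I expect the main obstacle to be the compact embedding step, specifically the iteration that recovers control of $u$ and its lower-order derivatives from $D^{m-1}u$ alone. Each application of the first-order Poincaré–Wirtinger inequality controls a derivative only up to an additive constant, so one must track these constants through the $m-1$ levels and absorb the resulting polynomial ambiguities into the single normalization on $u$; the Lipschitz boundary hypothesis is essential precisely here, both for the validity of the first-order inequality and for the extension and compactness theory on $\O$. An alternative, more constructive proof would dispense with the contradiction and iterate Poincaré–Wirtinger directly, estimating $\|D^{m-1}u\|_{L^1}$, then $\|D^{m-2}u\|_{L^1}$, and so on down to $u$, but the bookkeeping of the polynomial corrections is the same difficulty in a different guise.
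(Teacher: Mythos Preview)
The paper does not prove this theorem; it merely quotes it from \cite{FM} without argument, so there is no proof in the paper to compare against.

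That said, your observation about the statement itself is correct and worth flagging: as literally written the inequality is false, since any nonzero polynomial of degree at most $m-1$ has $|D^m u|(\O)=0$ but positive $BV^m$ norm. Some normalization (subtracting a polynomial projection, or working in the quotient by $\mathcal{P}_{m-1}$) is required. This is consistent with how the paper actually uses the result, namely in the form \eqref{BVpoinc} where $P^{m-1}_{Q'}[u]$ is subtracted from $u$.

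Your compactness--contradiction argument is the standard route for inequalities of this type and is sound. One small simplification of the compactness step: since the sequence is bounded in $BV^m$, each $D^{j}u_k$ for $0\le j\le m-2$ is already bounded in $W^{1,1}(\O)$ (its gradient $D^{j+1}u_k$ lies in $L^1$), hence precompact in $L^1$ by the ordinary Rellich theorem, while $D^{m-1}u_k$ is bounded in $BV$ and hence precompact in $L^1$ as well. So no iterated Poincar\'e--Wirtinger is needed to extract the convergent subsequence; the only place the polynomial bookkeeping matters is in passing the normalization to the limit. A cosmetic point: you invoke connectedness of $\O$, which is not in the hypothesis; without it the kernel of $D^m$ consists of piecewise polynomials on the components, but the argument adapts trivially.
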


In particular, the following version of Poincare's inequality holds.
Let $u\in BV^m(Q')$ with $Q'$ a translation of a cube of sidelenght $\eps$, then there exists a constant $C=C(n,m)$ such that
\be\label{BVpoinc}
\int_{Q'} |u-P^{m-1}_{Q'}[u]|\leq C \eps^m |D^m u|(Q'),
\ee
where $P^{m-1}_{Q'}[u]$ is defined in \eqref{poly}. This follows as in Lemma 2.1 of \cite{Marcus}.

%the polynomial  of degree $m-1$ and
%By the nature of its definition, the space $BV^m$ inherits the Poincare-Wirtinger inequality  which can be proved exactly as the corresponding first order result. 

%Let $n>m$, $u\in BV^m(Q')$ with $Q'=Q'(x_0, \eps)$. Then there exists a unique polynomial $P^{m-1}_{Q'}[u]$ of degree $m-1$ such that \eqref{polycond} holds and there exists a constant $C=C(n,m)$ such that
%\[
%\left(\fint_{Q'} |u-P^{m-1}_{Q'}[u]|^{\frac{n}{n-m}}\right)^{\frac{n-m}{n}}\leq C  \frac{1}{\eps^{n-m}} |\nabla^m u|(Q').
%\]

%We end this subsection with a higher- order variant of the compactness result in $BV$ (Theorem 3.23 in \cite{AFP}).

%\begin{prop}[Compactness result in $BV^m$ {\cite[Lemma 2.1]{JS}} ]\label{compBVm}
%	Let $\Omega \subset \mathbb R^n$ be open and bounded with Lipschitz boundary, and let $(u_k)_{k=1}^\infty$ be a sequence of $BV^m$ functions such that 
%	$$
%	\| u_k\|_{BV^m(\Omega)}\leq M
%	$$
%	for some constant $M>0$. Then there is a subsequence $(u_{k_l})_{l=1}^\infty$ and a function $u\in BV^m(\Omega)$ such that
%	$$
%	\| u- u_{k_l}\|_{W^{m-1,1}(\Omega)} \to 0\;\, \text{ for } l\to \infty \;\,\text{ and }\,\; \| u\|_{BV^m(\Omega)}\leq M. 
%	$$
%\end{prop}
%\textcolor{red}{In particular $P^{m-1}_{Q^\prime}[u]$ satisfies Eq. (9) in \cite{GS})The uniqueness of the polynomial  follows by induction as in the known result Theorem 3.6.10 in \cite{Morrey}.}

Finally, given two functions $u, v\in L^1(\Omega)$, the following properties hold true:
\begin{itemize}
    \item an estimate from above:
    \be\label{upperK}
|K_\eps(u,m,\Omega)-K_\eps(v,m,\Omega)|\leq K_\eps(u-v,m,\Omega)
\ee
\item convexity: for any $\lambda\in [0,1]$, we have 
\be\label{convK}K_\eps(\lambda u+ (1-\lambda) v, m,\Omega) \leq \lambda K_\eps(u, m,\Omega)+(1-\lambda) K_\eps (v, m,\Omega) 
\ee
\end{itemize}

%\subsection{Correction }

%In the following proposition we correct a mistake in Proposition 3 of \cite{GS} and we improve the lower bound finding the optimal constant. In the lower bound of Eq. $(44)$ of \cite{GS} the costant in front of the term $|\nabla^m f(\R^n)|$ was not one. This error came from a missing constant in Step $3$ of the proof of Theorem $1$ (precisely in ....). 

%\textcolor{red}{in questa proposizione sotto credo che si devono togliere tutti i ",1" nel funzionale. qui l'abbiamo chiamato senza 1}

We summarize here some results of \cite{GS}:

\begin{theo}\label{sumarize}
It holds true that
\begin{itemize}
\item %\cite[Theorem]{GS} 
if $u\in W^{m,1}(\Omega)$, then 
\[
\lim_{\eps\to 0} K_\eps(u,m,\Omega)=\beta(m,n) \int_{\Omega}|\nabla^m u|\, dx\,,
\]
where the constant $\beta(m,n)$ is defined in \eqref{beta}. 
\item %\cite[???]{GS}
if $u\in W^{m-1,1}_{\text{loc}}(\Omega)$, then  
\be\label{charBVm}
	u\in BV^m(\Omega) \quad \Longleftrightarrow \quad \liminf_{\eps\to 0} K_\eps(u,m,\Omega)<+\infty.
	\ee
\end{itemize}
 \end{theo}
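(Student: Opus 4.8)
The plan is to treat the two assertions separately, since the first is a pointwise asymptotic identity for $u\in W^{m,1}(\O)$ while the second is a one-parameter characterization of $BV^m(\O)$ through $\liminf_\eps K_\eps$.

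For the upper bound in the first item I would localize to a single cube and blow up. Given any admissible family $\mathcal G_\eps$ I rewrite $\eps^{n-m}\fint_{Q'}|u-P^{m-1}_{Q'}[u]|=\eps^{-m}\int_{Q'}|u-P^{m-1}_{Q'}[u]|$. On a cube $Q'=x_0+\eps R Q$ I freeze $\nabla^m u$ at its average $T_{Q'}=\fint_{Q'}\nabla^m u$, the error being controlled by the mean oscillation of $\nabla^m u$ over $Q'$, which is $L^1$-infinitesimal as $\eps\to 0$ by absolute continuity of $x\mapsto\nabla^m u$ (a remainder bound of the type \eqref{BVpoinc} quantifies this). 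After rescaling $x=x_0+\eps R y$ the leading contribution is $\tfrac{\eps^{m}}{m!}T_{Q'}[(Ry)^{\otimes m}]$ minus the lower-degree polynomial generated by $P^{m-1}$ through \eqref{poly}, so that $\eps^{-m}\int_{Q'}|u-P^{m-1}_{Q'}[u]|=\eps^{n}\,\Phi_R(T_{Q'})+o(\eps^{n})$, where $\Phi_R$ is exactly the single-cell oscillation functional appearing in \eqref{beta} read in the orientation $R$. Summing over the disjoint cubes and using $\eps^{n}|T_{Q'}|=\bigl|\int_{Q'}\nabla^m u\bigr|\le\int_{Q'}|\nabla^m u|$ together with the bound $\Phi_R(T)\le\beta(n,m)|T|$ encoded in the maximization over $\nu$ in \eqref{beta} yields $\limsup_\eps K_\eps(u,m,\O)\le\beta(n,m)\int_\O|\nabla^m u|$.

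For the matching lower bound I would test $K_\eps$ against a regular grid of $\eps$-cubes, orienting each cube independently (which admissibility permits) so as to realize, on the Lebesgue set of $\nabla^m u$, the maximizing direction $\nu$ of \eqref{beta} and thereby extract the full per-cell value $\beta(n,m)|T_{Q'}|$; the same Riemann-sum passage then gives $\liminf_\eps K_\eps(u,m,\O)\ge\beta(n,m)\int_\O|\nabla^m u|$, and together with the previous paragraph the limit exists and equals the claimed value. The \emph{main obstacle} is precisely the identification of the constant: one must verify that the orientation-optimized single-cell oscillation equals $\beta(n,m)$ times the Euclidean norm of the frozen tensor for \emph{every} symmetric $m$-tensor, not merely for the rank-one tensors $\nu^{\otimes m}$ implicit in \eqref{beta}. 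This is where the precise meaning of the notation in \eqref{beta} (referred there to \cite{GS}) and an isotropy/symmetrization argument must be invoked, and it is the step I expect to demand the most care.

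For the characterization I argue by the two implications. If $u\in BV^m(\O)$, the Poincaré estimate \eqref{BVpoinc} gives $\int_{Q'}|u-P^{m-1}_{Q'}[u]|\le C\eps^m|D^m u|(Q')$ on each cube, whence $\eps^{-m}\sum_{Q'}\int_{Q'}|u-P^{m-1}_{Q'}[u]|\le C\sum_{Q'}|D^m u|(Q')\le C|D^m u|(\O)$ by disjointness, uniformly in $\eps$ and in the family, so $\liminf_\eps K_\eps(u,m,\O)<+\infty$. For the converse I mollify: the convolution identity \eqref{convP} lets me repeat the Jensen-plus-Fubini computation of Lemma \ref{lemmaconvol} to obtain $K_\eps(\rho_\sigma*u,m,A)\le K_\eps(u,m,\O)$ for $A\subset\subset\O$ and $\sigma<\dist(A,\partial\O)$. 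Since $\rho_\sigma*u$ is smooth, the first item applied on $A$ yields $\beta(n,m)\int_A|\nabla^m(\rho_\sigma*u)|=\lim_\eps K_\eps(\rho_\sigma*u,m,A)\le\liminf_\eps K_\eps(u,m,\O)<+\infty$, a bound uniform in $\sigma$. Letting $\sigma\to 0$, using $\rho_\sigma*u\to u$ in $W^{m-1,1}_{\text{loc}}$ and the lower semicontinuity of the total variation of $D^m$ under this convergence, I conclude $|D^m u|(A)<+\infty$ for every $A\subset\subset\O$, hence $u\in BV^m(\O)$ via Theorem \ref{PoincBVm}; this establishes \eqref{charBVm}.
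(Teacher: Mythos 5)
A preliminary structural remark: the paper contains no proof of Theorem \ref{sumarize} to compare yours against --- it is stated verbatim as a summary of results imported from \cite{GS} (``We summarize here some results of \cite{GS}'') and is used downstream only as a black box, in Proposition \ref{propHO} and in the proof of Theorem \ref{gammaK}. So your proposal can only be judged on its own merits. Your treatment of the second bullet is essentially sound and uses exactly the toolkit the paper itself assembles: the implication $u\in BV^m(\Omega)\Rightarrow\liminf_\eps K_\eps<+\infty$ is the Poincar\'e bound \eqref{BVpoinc} summed over disjoint cubes (this is also how the paper gets $\limsup_\eps K_\eps\leq C_2|D^mu|(\Omega)$), and your converse via mollification is the same mechanism as \eqref{usigmautilde}--\eqref{Kconvol} in Proposition \ref{propHO}, followed by lower semicontinuity of $|D^m\cdot|$ as $\sigma\to0$ and exhaustion $A\uparrow\Omega$. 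Two caveats: you need $\Omega$ bounded Lipschitz to pass from $|D^mu|(A)$ bounded uniformly to $u\in BV^m(\Omega)$ via Theorem \ref{PoincBVm}; and your converse invokes the first bullet for $\rho_\sigma*u$, so the two bullets are not logically independent in the order you treat them --- though for \eqref{charBVm} alone a non-sharp lower bound $\liminf_\eps K_\eps(v,m,A)\geq c\int_A|\nabla^m v|$ for smooth $v$ would suffice, and that much your freezing argument does deliver.

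The genuine gap is in the first bullet, precisely at the point you flag and then defer. The upper bound is fine: freezing $T_{Q'}=\fint_{Q'}\nabla^m u$ and using $\Phi_R(T)\leq\beta(n,m)|T|$, immediate from the definition \eqref{beta} as a maximum over unit tensors, gives $\limsup_\eps K_\eps\leq\beta(n,m)\int_\Omega|\nabla^m u|$. But the lower bound as written does not close. Per cell, the most you can extract by orienting $Q'$ is
\[
\Phi(T):=\sup_{R\in SO(n)}\frac{1}{m!}\int_Q\Bigl|T[(Rx)^{\otimes m}]-\fint_Q T[(Ry)^{\otimes m}]\,dy\Bigr|\,dx=|T|\,\sup_{R}\phi\bigl(R\cdot\hat{T}\bigr),
\]
and for $m\geq2$ the $O(n)$-orbit of a unit symmetric tensor $\hat T$ is a \emph{proper} subset of the unit sphere of $S^m(\R^n)$ (for $m=2$ it is the set of unit symmetric matrices with the eigenvalues of $\hat T$), so there is no a priori reason the orbit meets a maximizer of $\phi$; this is in sharp contrast with $m=1$, where the orbit is the whole sphere and the identification is automatic. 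Concretely, for $m=2$ the tensor $T=\mathrm{Id}$ generates the orientation-independent cell polynomial $\frac12\bigl(|x|^2-\fint_Q|y|^2\,dy\bigr)$, and nothing in your argument shows its mean oscillation equals $\beta(n,2)\,|\mathrm{Id}|$; rotation invariance only tells you that $\Phi$ is a rotation-invariant norm on $S^m(\R^n)$, not that it is proportional to the Euclidean one. Thus your Riemann-sum passage proves $\liminf_\eps K_\eps(u,m,\Omega)\geq\int_\Omega\Phi(\nabla^m u)$, and the theorem additionally requires the nontrivial identity $\Phi(T)=\beta(n,m)|T|$ for \emph{every} symmetric $m$-tensor $T$ --- a statement whose very meaning depends on the conventions for $|\nabla^m u|$ and for the pairing $\nu\cdot x^m$ fixed in Section 2 of \cite{GS}, and which is the analytic heart of the result proved there. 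Announcing it as ``the step I expect to demand the most care'' names the obstacle but does not remove it; as submitted, the first bullet --- and with it the sharp constant in the pointwise limit --- remains unproved.
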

%\begin{rema}Moreover, for $u\in W^{m-1,1}_{\text{loc}}(\R^n)$ 
Moreover, there are  positive constants $C_1$ and $C_2$, independent of $u$, such that
	\[
 C_1|D^mu|(\Omega)\leq \liminf_{\eps\to 0^+} K_\eps(u,m,\Omega) \leq \limsup_{\eps\to 0^+} K_\eps(u,m,\Omega)\leq C_2 |D^m u|(\Omega).
	\]
%\end{rema}
%\begin{proof}
%	The correct inequalities in Step $3$ in the proof of Theorem $1$ in \cite{GS} are
%	\begin{multline*}
%	|\nabla^mf_\eps(z_j)|^p = \left\lvert\int_{\R^n} f(y)\nabla^m\rho_\eps(z_j-y) \, dy\right\lvert^p= \left\lvert\int_{\R^n} \left(f(y)-P^{m-1}_{Q_{j}}[f](y)\right)\nabla^m\rho_\eps(z_j-y) \, dy\right\lvert^p
%	\\\leq C_1\eps^{(-m-n)p+np-n} \int_{Q_j} \left\lvert f(y)-P^{m-1}_{Q_{j}}[f](y)\right\lvert^p\, dy = C_1 \eps^{-mp} \fint_{Q_j} \left\lvert f(y)-P^{m-1}_{Q_{j}}[f](y)\right\lvert^p\, dy.
%	\end{multline*}
%	So equation $(45)$ in \cite{GS} reads as
%	\[
%	\frac{C_1}{1+\delta} \int_{Q(x_i,r)}|\nabla^mf_\eps(x)|\,dx
%	\leq \eps^{n-m} \sum_{j=1}^k \fint_{Q_j} \left\lvert f(y)-P^{m-1}_{Q_{j}}[f](y)\right\lvert \, dy + \frac{C}{\delta}\sigma r^n,
%	\]
%	and now following the same steps the correction is concluded.
%\end{proof}
\begin{rema}
In the case $m=1$ the constant $C_1=\frac{1}{4}$ was obtained in \cite{FMS2}. We will find the constant $C_1$ for $m>1$ in Proposition \ref{propHO}. 
\end{rema}

In the following statement, we prove that for $m=1$ the truncation of function $u\in L^{1}$ does not increase the oscillation in every cube $Q'\subset \mathbb R^n$ and this will serve its purpose in the upcoming stage. Notice that for $m>2$ the truncation could not preserve Sobolev space and the truncated function might not belong to $W^{m-1,1}$ anymore. We define the full truncation at level $k>0$ as 
 \be\label{truncfunc}
T_k(u)=\begin{cases} u & \text{if } |u|\leq k \\ k & \text{if } u>k \\ -k & \text{if } u<-k. \end{cases}
 \ee
 and prove the following.

\begin{prop}
\label{trunc_lemma}
    Let $m=1 $ and $u\in L^{1}_{\text{loc}} (\Omega)$, then
\begin{equation}\label{trunc_final}
    K_\eps(T_k(u), \Omega) \leq K_\eps(u,\Omega).
        \end{equation}
\end{prop}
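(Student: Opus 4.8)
The plan is to reduce \eqref{trunc_final} to a single‑cube estimate and then show that truncation does not increase the mean oscillation on any cube. Since $m=1$, the polynomial $P^{0}_{Q'}[u]$ in \eqref{poly} is simply the average $\fint_{Q'}u$, so for every admissible family $\mathcal G_\eps$ the summand attached to a cube $Q'$ is the mean deviation $\fint_{Q'}\big|u-\fint_{Q'}u\big|\,dx$. As the collection of admissible families is independent of the function, it suffices to prove the per‑cube inequality
\begin{equation}\label{percube}
\fint_{Q'}\Big|T_k(u)-\fint_{Q'}T_k(u)\Big|\,dx\ \le\ \fint_{Q'}\Big|u-\fint_{Q'}u\Big|\,dx
\end{equation}
for an arbitrary cube $Q'$: summing \eqref{percube} over any $\mathcal G_\eps$ and taking the supremum then gives \eqref{trunc_final}. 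I would further reduce \eqref{percube} to one‑sided truncation. Writing $T_k=\min(\cdot,k)\circ\max(\cdot,-k)$, observing that $\max(u,-k)=-\min(-u,k)$, and using that the left‑hand side of \eqref{percube} is invariant under $u\mapsto -u$, it is enough to prove \eqref{percube} for the upper truncation $u^{k}:=\min(u,k)$ and then apply this estimate twice along the composition.

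The tool is the sign decomposition of the mean deviation: for $v\in L^{1}(Q')$ with $\bar v=\fint_{Q'}v$ one has $\fint_{Q'}(v-\bar v)=0$, whence
\begin{equation*}
\fint_{Q'}|v-\bar v|\,dx=2\fint_{Q'}(\bar v-v)^{+}\,dx,
\end{equation*}
$(\cdot)^{+}$ denoting the positive part. Set $a=\fint_{Q'}u$ and $b=\fint_{Q'}u^{k}$; since $u^{k}\le u$ we have $b\le a$, and since $u^{k}\le k$ we have $b\le k$. The decisive step is to measure the oscillation of $u^{k}$ through its lower tail: on $\{u\ge k\}$ one has $u^{k}=k\ge b$, so $(b-u^{k})^{+}=0$, and also $(b-u)^{+}=0$ since there $u\ge k\ge b$; on $\{u<k\}$ one has $u^{k}=u$, so $(b-u^{k})^{+}=(b-u)^{+}$. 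Hence $\fint_{Q'}(b-u^{k})^{+}=\fint_{Q'}(b-u)^{+}$, and the monotonicity of $c\mapsto(c-u)^{+}$ together with $b\le a$ yields $\fint_{Q'}(b-u)^{+}\le\fint_{Q'}(a-u)^{+}$. Applying the sign decomposition to both $u^{k}$ and $u$,
\begin{equation*}
\fint_{Q'}|u^{k}-b|\,dx=2\fint_{Q'}(b-u^{k})^{+}\,dx=2\fint_{Q'}(b-u)^{+}\,dx\le 2\fint_{Q'}(a-u)^{+}\,dx=\fint_{Q'}|u-a|\,dx,
\end{equation*}
which is \eqref{percube} for $u^{k}$.

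The main obstacle, and the reason a naive argument fails, is that inside each integral the subtracted competitor is the mean of the truncated function rather than a free constant: the mean minimizes the $L^{2}$ deviation, not the $L^{1}$ deviation (that is the median), so the one‑line chain ``$\fint_{Q'}|T_k(u)-\fint_{Q'}T_k(u)|\le\fint_{Q'}|T_k(u)-c|\le\fint_{Q'}|u-c|$'' with a constant $c\in[-k,k]$ is unavailable, while the symmetric double‑integral bound only gives \eqref{percube} up to a factor $2$. The device that removes this difficulty is precisely the choice to read the oscillation of the upper truncation off its lower tail: this makes the truncated region $\{u\ge k\}$ contribute nothing and reduces the whole estimate to the elementary monotonicity $b\le a$ of the averages.
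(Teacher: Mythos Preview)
Your proof is correct and follows essentially the same route as the paper: both reduce to the per-cube estimate, treat the one-sided truncations $\min(\cdot,k)$ and $\max(\cdot,-k)$ separately, and exploit the lower-tail identity $\fint_{Q'}|v-\bar v|=2\fint_{Q'}(\bar v-v)^{+}$ to see that truncation from above leaves the relevant integral unchanged while the averages move in the favorable direction. Your use of positive-part notation is a cleaner packaging of what the paper does via the partition $E_1=\{u\le k\}$, $E_2=\{u>k\}$, but the key mechanism is identical.
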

\begin{proof}
Let $Q'\subset\Omega$ a cube centered in $x_0$. The polynomial $P^{0}_{Q'}[u]$ is described in \eqref{poly} and precisely
\begin{equation}\label{poly_1}   
P^0_{Q'}[u](x)= \fint_{Q'}u. 
\end{equation}
Firstly, we want to prove that 
\begin{equation}\label{trunc_eq}
\fint_{Q'} \left| T_k u- \fint_{Q'}T_k u \right| \leq \fint_{Q'} \left| u- \fint_{Q'}u \right|.
\end{equation}
Clearly, 
\begin{equation}\label{equali}
\fint_{Q'} \left|u- \fint_{Q'}u\right|= \frac{2}{|Q'|}\int_{\{u>\fint_{Q'}u\}} \left(u- \fint_{Q'}u\right)=\frac{2}{|Q'|}\int_{\{u<\fint_{Q'}u\}} \left( \fint_{Q'}u- u\right).
\end{equation}
For a fixed $k\in \mathbb R$ let us denote $u_1(x)=u(x)$, $u_2(x)=k$ and
define the truncation of $u$ from above at level $k$ by 
\[
g=\min (u_1,u_2)=\min(u,k).
\]
\\
Thus, let us consider 
\[
E_1=\{x\in Q^\prime\,:\, u_1(x)\leq u_2(x)\}=\{x\in Q^\prime\,:\, g(x)= u_1(x)\} \quad \text{and }\quad E_2=Q^\prime\setminus E_1.
\]
We have, by \eqref{equali}

\[
\begin{split}
\fint_{Q'} \left| g-\fint_{Q'}g \right| &= \frac{2}{|Q'|}\int_{ \{g<\fint_{Q'}g\} }\left(\fint_{Q'}g - g\right)= \frac{2}{|Q|} \sum_{i=1}^2 \int_{ \{x\in E_i: u_i<\fint_{Q'}g\} }\left(\fint_{Q'}g - u_i \right) \\ & \leq \frac{2}{|Q'|} \sum_{i=1}^2 \int_{ \{x\in Q': u_i<\fint_{Q'}u_i\} }\left(\fint_{Q'}u_i -u_i \right) = \fint_{Q'} \left|u_1-\fint_{Q'}u_1\right|.
\end{split}
\]

Similarly, define the truncation from below at level $k$ by 
\[h=\max (u_1,u_2)=\max (u,k)\]
 and we have
 $$\fint_{Q'} \left| h-\fint_{Q'}h \right| \leq \fint_{Q'} \left|u_1-\fint_{Q'}u_1\right|.$$
 Since
 \[
 T_k u = \max (\min (u,k),-k),
 \] \eqref{trunc_eq} holds true and summing up over all cubes in the family $\mathcal G_\eps$ we conclude.
\end{proof}

\section{$\Gamma-$convergence for $H_\eps$ and $K_\eps$}

We recall (see for example \cite{DM}) that a family of functionals $\F_\eps$ defined on $L^1(\Omega)$, $\Gamma$-converges in $L^1(\Omega)$ when $\eps$ goes to $0$ to a functional $\F$ defined on $L^1(\Omega)$ if and only if, the following two conditions are satisfied:
\begin{itemize}
	\item[i)] (\textit{$\Gamma-$liminf inequality}) $\forall u \in L^1(\Omega)$ and for every family $\{u_\eps\}$ such that $u_\eps \to u$ in $L^1(\Omega)$ as $\eps$ goes to $0$, one has
	\[ \liminf_{\eps\to 0} \F_\eps(u_\eps) \geq \F(u);
	\]
	\item[ii)] (\textit{$\Gamma-$limsup inequality}) $\forall u \in L^1(\Omega)$, there exists a family $\{\tilde{u}_\eps\}$ such that $\tilde{u}_\eps$ converges to $u$ in $L^1(\Omega)$ as $\eps$ goes to $0$ and
	\[ \limsup_{\eps\to 0} \F_\eps(\tilde{u}_\eps) \leq \F(u).
	\]
\end{itemize}

We give now the proofs of Theorem \ref{gammaH} and Theorem \ref{gammaK}. 

\begin{proof}[Proof of Theorem \ref{gammaH}]
	We assume without loss of generality that $ \Psi(D u)(\Omega)<+\infty$ and then by \eqref{equivnorm} $f\in BV(\Omega)$. To prove the $\Gamma-$limsup inequality by \eqref{star} we consider a sequence $(v_h)\in C^\infty(\Omega)$ such that 
 \[
 v_h \to u \quad \text{in } L^1(\Omega) \quad \text{when } h \text{ goes to } 0 
 \]
 and
 \[
 \int_\Omega|\nabla v_h| \to |Du|(\Omega).
 \]
 By Reshetnyak’s continuity theorem, it holds that
\be\label{lim""}
 \int_\Omega \psi(\nabla v_h) \to \Psi(Du)(\Omega).
 \ee
 By \eqref{w11lim1}, for each $h$ let $\eps_h$ be such that 
 \be\label{lim}
 \left|H_\eps(v_h,\Omega)- \int_\Omega\psi(\nabla v_h)\right|<h \qquad \forall \eps < \eps_h.
 \ee
 Without loss of generality we may assume that $\eps_h$ is an infinitesimal decreasing sequence with respect to $h$. We set
 \[
 u_\eps= v_h \qquad \text{if } \eps_{h+1}< \eps \leq \eps_h.
 \]
 Combining \eqref{lim""} and \eqref{lim}, it holds 
 \[
 u_\eps \to u \quad \text{in } L^1(\Omega)
 \]
 and 
 \[
 \lim_{\eps\to 0} H_\eps(u_\eps,\Omega) = \Psi(Du)
 \]
 proving the $\Gamma-$limsup inequality.
% we use a density argument for the $\Gamma-$limsup inequality. F
 %First we observe that since $\psi$ is a Lipschitz function (see Proposition 3.4 of \cite{FFGS}) we have that
%	\[
%	 \Psi(Df) \simeq |Df|,
%	\]
%	where $\simeq$ means equivalent as seminorm. T
% hen by considering $\mathcal{D}$ a subset of $\{ g\in C^\infty(\O)\, : \, \int_{\O}\psi(\nabla g)<\infty\}$  which is dense with respect to the topology induced by the $L^1_{loc}$ metric and the convergence of the total variation seminorm, it is sufficient to prove the limsup inequality on $\mathcal{D}$. Taking $f\in \mathcal{D}$ and choosing $f_\eps=f$ for every $\eps>0$ by Theorem 1 of \cite{FFGS} we have
%	\[ \limsup_{\eps\to 0} H_\eps(u_\eps,\Omega) = \limsup_{\eps\to 0} H_\eps(u,\Omega) = \int_{\Omega} \psi(\nabla u)= \Psi(Du)(\Omega).
%	\]

 To prove the $\Gamma-$liminf inequality we use the convolution strategy by considering $u_\sigma=u*\rho_\sigma$ and by fixing an open set $A$, $A\subset\subset \Omega$ as in Lemma \ref{lemmaconvol}. 
 If $u\in BV$ then by $a)$ of Theorem \ref{w11lim1}, we have
	\[
	\lim_{\eps\to 0} H_\eps(u*\rho_\sigma, A) = \int_{A} \psi\big(\nabla( u*\rho_\sigma)\big)\, dx.
	\]
Moreover, by using \eqref{upperdiff} and the Poincarè inequality \eqref{Poinc}, we get
	\be\label{est2}
 \begin{split}
	|H_\eps(u_\eps*\rho_\sigma,A)- &H_\eps(u*\rho_\sigma,A)|\\&\leq C\int_A |\nabla\big( (u_\eps-u)*\rho_\sigma\big)|\\&\leq C \|\nabla \rho_\sigma\|_{L^1(A)} \|u_\eps-u\|_{L^1(A)},
\end{split}		
 \ee
	where $C$ depends only on $A$ and the dimension $n$.
	Then applying \eqref{convol} and \eqref{est2}, we have
	\be\label{due}
	\liminf_{\eps\to 0}H_\eps(u_\eps,A)\geq \lim_{\eps\to 0}H_\eps(u_\eps*\rho_\sigma,A)= \int_{A} \psi(\nabla u*\rho_\sigma)\, dx.
 \ee
	%Moreover, by ABBF, the functionals $H_\eps$ are $L^1_{loc}(\R^n)$-lower semicontinuous: given $f_\delta\to f$
	%\[
	%\liminf_{\delta\to 0}H_\eps(f_\delta) \geq H_\eps(f)
	%\]
Letting $\sigma\to 0$, by Definition \ref{defPsi} we have
 \be\label{uno}
\lim_{\sigma\to 0} \int_{A} \psi\big(\nabla (u*\rho_\sigma)\big)\, dx \geq \Psi(Du)(A).
 \ee
Thus, for every $u_\eps\to u$, combining \eqref{uno} and \eqref{due}, letting $A\uparrow \Omega$, we get
	\[
	\Psi(D u)(\Omega)\leq \liminf_{\eps\to 0} H_\eps(u_\eps,\Omega).
	\]
 proving the $\Gamma-$liminf inequality.
\end{proof}

%We compute now the $\Gamma-$limit of $K_\eps(u,m,\Omega)$. We observe that the case $m=1$ was proved in \cite{ARB} with a different strategy. 

We prove now a lower bound for the functional $K_\eps(u,m,\Omega)$ for $m\geq 2$. The case $m=1$ has been proved in Proposition 3.4 in \cite{FMS2}.

\begin{prop}\label{propHO}
	Let $m\geq 2$ and $u\in W^{m-1,1}_{\text{loc}}(\Omega)$. Then
	\[
	\liminf_{\eps\to 0^+} K_\eps(u,m,\Omega) \geq \beta(m,n)|D^mu|(\Omega)
	\]
\end{prop}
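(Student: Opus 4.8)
The plan is to mirror the $\Gamma$-liminf argument used for $H_\eps$ in Theorem \ref{gammaH}, adapted to the higher-order setting. First I would dispose of the trivial case: if $|D^m u|(\Omega)=+\infty$ then, since $u\in W^{m-1,1}_{\text{loc}}(\Omega)$, the characterization \eqref{charBVm} forces $\liminf_{\eps\to 0}K_\eps(u,m,\Omega)=+\infty$ and the asserted inequality holds. Hence I may assume $u\in BV^m(\Omega)$, i.e. $|D^m u|(\Omega)<+\infty$.

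The key tool is a convolution estimate for $K_\eps$, entirely analogous to Lemma \ref{lemmaconvol}: for every $A\subset\subset\Omega$ and every $0<\sigma<\dist(A,\partial\Omega)$, setting $u_\sigma=\rho_\sigma*u$, one has
$$K_\eps(u_\sigma,m,A)\leq K_\eps(u,m,\Omega).$$
I would prove this exactly as in Lemma \ref{lemmaconvol}, now invoking the convolution property \eqref{convP} of the polynomial $P^{m-1}_{Q'}$ in place of the elementary identity for averages: writing $u_\sigma(x)=\int_B\rho(y)u(x-\sigma y)\,dy$, applying Jensen's inequality to pull $\int_B\rho(y)\,dy$ outside the absolute value, using Fubini, and finally performing the change of variables that turns each cube $Q'$ into its translate $Q'-\sigma y$. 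The role of \eqref{convP} is to guarantee that the polynomial subtracted in $K_\eps(u_\sigma,m,A)$ splits as the $\rho$-average of the polynomials attached to the shifted cubes $Q'-\sigma y$, so that after the change of variables each summand is dominated by the corresponding term of $K_\eps(u,m,\Omega)$; taking the supremum over admissible families $\mathcal G_\eps$ yields the estimate.

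With this estimate in hand, since $u_\sigma\in C^\infty(A)\cap W^{m,1}(A)$, Theorem \ref{sumarize} gives $\lim_{\eps\to 0}K_\eps(u_\sigma,m,A)=\beta(m,n)\int_A|\nabla^m u_\sigma|\,dx$. Combining this with the convolution estimate,
$$\liminf_{\eps\to 0}K_\eps(u,m,\Omega)\geq \lim_{\eps\to 0}K_\eps(u_\sigma,m,A)=\beta(m,n)\int_A|\nabla^m u_\sigma|\,dx$$
for every admissible $\sigma$. Now I would let $\sigma\to 0$: since $\nabla^m u_\sigma=\rho_\sigma*D^m u$ converges weakly-$*$ to $D^m u$ as measures on $A$, lower semicontinuity of the total variation gives $\liminf_{\sigma\to 0}\int_A|\nabla^m u_\sigma|\,dx\geq |D^m u|(A)$, whence $\liminf_{\eps\to 0}K_\eps(u,m,\Omega)\geq \beta(m,n)|D^m u|(A)$. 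Finally, letting $A\uparrow\Omega$ and using inner regularity of the measure $|D^m u|$ yields the claim.

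The main obstacle is the convolution estimate, and specifically the correct bookkeeping of the polynomial part. One has to check that the Taylor-type polynomial transforms correctly under translation of the cube, namely $P^{m-1}_{Q'}[u(\cdot-\sigma y)](x)=P^{m-1}_{Q'-\sigma y}[u](x-\sigma y)$; this rests on $D^\alpha(\rho_\sigma*u)=\rho_\sigma*D^\alpha u$ together with the definition \eqref{poly}, and is exactly the content needed to legitimize \eqref{convP} inside the change of variables. Once the polynomial is handled, the Jensen--Fubini--change-of-variables scheme of Lemma \ref{lemmaconvol} carries over almost verbatim, and the passages to the limit in $\sigma$ and then in $A$ are routine.
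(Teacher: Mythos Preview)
Your proposal is correct and follows essentially the same route as the paper: reduce to the case $u\in BV^m(\Omega)$ via the characterization \eqref{charBVm}, establish the convolution inequality $K_\eps(u_\sigma,m,A)\leq K_\eps(u,m,\Omega)$ by the Jensen--Fubini--translation scheme (with the polynomial identity $P^{m-1}_{Q'}[u(\cdot-\sigma y)](x)=P^{m-1}_{Q'-\sigma y}[u](x-\sigma y)$ playing exactly the role you describe), apply the $W^{m,1}$ pointwise limit from Theorem \ref{sumarize}, and then pass to the limit $\sigma\to 0$, $A\uparrow\Omega$. The paper's proof is organized identically; the only cosmetic difference is that it phrases the reduction as ``assume $\liminf K_\eps<+\infty$, hence $u\in BV^m$'' rather than your contrapositive formulation.
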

\begin{proof}
Without loss of generality we can assume that
$$\liminf_{\eps\to 0^+} K_\eps(u,m,\Omega)<+ \infty.$$ By Theorem \ref{sumarize} we have that $u\in BV^m$. We fix an open set $A\subset\subset \Omega$, $\sigma>0$ and we set $u_\sigma=\rho_\sigma*u$, as in the proof of Lemma \ref{lemmaconvol}. Then, by using the definition of $u_\sigma$, Jensen's inequality and Fubini's Theorem, we have that
\be\label{usigmautilde}
\begin{split}
	\eps^{n-m} \sum_{Q'\in \mathcal G_\eps}\fint_{Q^\prime} \Big| &u_\sigma(x) -  P^{m-1}_{Q^\prime} [u_\sigma](x) \Big| dx \\& =\eps^{n-m} \sum_{Q'\in \mathcal G_\eps}  \fint_{Q^\prime} \left| \int_{B}\rho(y)\left( \tilde{u}(x) -P^{m-1}_{Q^\prime} [\tilde{u}](x)\right) dy\right| dx    \\ &
	\leq \eps^{n-m} \int_B \rho(y)  \left( \sum_{Q'\in \mathcal G_\eps} \fint_{Q^\prime}\left|\tilde{u}(x)- P^{m-1}_{Q^\prime} [\tilde{u}](x)\right| dx\right)dy,
	\end{split}
 \ee
where $\Tilde{u}(x)=u(x-\sigma y)$. 
Then, a change of variable shows that
\[
P_{Q^\prime}^{m-1}[\tilde{u}](x)=P_{Q^\prime-\sigma y}^{m-1}[u](x-\sigma y)
\]
%Moreover, by using the linearity property of the polynomial and the Poincarè inequality \eqref{BVpoinc}, we get
%\be\label{utildeu}
%\begin{split}
%\eps^{n-m}& \int_B \rho(y)  \left( \sum_{Q'\in \mathcal G_\eps} \fint_{Q^\prime}\left|\tilde{u}(x)- P^{m-1}_{Q^\prime} [\tilde{u}](x)\right|  dx\right)dy\\& \leq
%\eps^{n-m} \int_B \rho(y)  \left( \sum_{Q'\in \mathcal G_\eps} \fint_{Q^\prime}\left|(\tilde{u}-u)(x)-P^{m-1}_{Q^\prime} [\tilde{u}-u](x)\right|+\left|u(x)-P^{m-1}_{Q^\prime} [u](x)\right| dx\right)dy\\ &\leq
%\int_B \rho(y)  \left( C |D^m (\tilde{u}-u)|(A)+\eps^{n-m}\sum_{Q'\in \mathcal G_\eps}\fint_{Q^\prime}\left|u(x)-P^{m-1}_{Q^\prime} [u](x)\right| dx\right)dy.
%\end{split}
%\ee
%Combining \eqref{usigmautilde} and \eqref{utildeu}, 
and since $\int_B \rho(y)dy =1$,  we deduce 
\be\label{Kconvol}
\eps^{n-m} \sum_{Q'\in \mathcal G_\eps}\fint_{Q^\prime} \Big| u_\sigma(x) -  P^{m-1}_{Q^\prime} [u_\sigma](x) \Big| dx \leq K_\eps(u,m,\Omega).
\ee
Since $u_\sigma \in W^{m,1}(A)$ from Theorem \ref{theoGS} we have
	\[
	\beta(m,n)\int_{A}|\nabla^mu_\sigma|= \liminf_{\eps\to 0}K_\eps(u_\sigma,m,A)\leq \liminf_{\eps\to 0} K_\eps(u,m,\Omega).
	\]
	We conclude letting $\sigma \to 0$ and then $A\uparrow \Omega$.
\end{proof}

\begin{proof}[Proof of Theorem \ref{gammaK}]
	To prove the $\Gamma-$limsup inequality we proceed in the same way as in Theorem \ref{gammaH} by replacing \eqref{lim""} with the convergence of total variation in higher dimension and \eqref{lim} with the analogous result holding by Theorem \ref{theoGS}. 
 %Here we consider $\mathcal{D}$ a subset of $\{ g\in C^\infty(\O)\, : \, |\nabla^m g|<\infty\}$  which is dense with respect to the topology induced by the $W^{m-1,1}$ metric and the convergence of the total variation seminorm. Then it is sufficient to prove the limsup inequality on $\mathcal{D}$. Taking $f\in \mathcal{D}$ and choosing $f_\eps=f$ for every $\eps>0$ by Theorem 1 of \cite{GS} we have
%	\[ \limsup_{\eps\to 0} K_\eps(f_\eps,m) = \limsup_{\eps\to 0} K_\eps(f,m) = \beta(m,n,1)|D^m f|(\Omega)
%	\]

 To prove the $\Gamma-$liminf inequality, from \eqref{Kconvol} in Proposition \ref{propHO} we have
	\[
	K_\eps(u*\rho_\sigma,m,A)\leq K_\eps(u,m,\Omega)
	\]
	and by Theorem \ref{theoGS}
	\[
	\lim_{\eps\to 0} K_\eps(u*\rho_\sigma,m,A) = \beta(m,n)\int_{A} |\nabla^m(u*\rho_\sigma)|\, dx.
	\]
	Moreover, by using \eqref{upperK} and the Poincarè inequality in $BV^m$ (equation \eqref{BVpoinc}), we have
	\[
	|K_\eps(u_\eps*\rho_\sigma,m,A)-K_\eps(u*\rho_\sigma,m,A)|\leq C \|\nabla^m \rho_\sigma\|_{L^1(A)} \|u_\eps-u\|_{L^1(A)} .
	\]
	Then combining these last three inequalities we have
	\[
	\liminf_{\eps\to 0}K_\eps(u_\eps,m,\Omega)\geq \lim_{\eps\to 0}K_\eps(u_\eps*\rho_\sigma,m,A)= \beta(m,n)\int_{A} |\nabla^m(u*\rho_\sigma)|\, dx.
	\]
	%Moreover, by ABBF, the functionals $H_\eps$ are $L^1_{loc}(\R^n)$-lower semicontinuous: given $f_\delta\to f$
	%\[
	%\liminf_{\delta\to 0}H_\eps(f_\delta) \geq H_\eps(f)
	%\]	
	Then, letting $\sigma\to 0$ and then $A\uparrow \Omega$, we prove that for every $u_\eps\to u$,
	\[
	\liminf_{\eps\to 0} K_\eps(u_\eps,m,\Omega) \geq \beta(m,n) |D^mu|(\Omega).
	\]
\end{proof}

\section{Application to image processing}
Having an image of poor quality, a challenging problem is to find a better one not so far from the original. A popular strategy in image processing to improve an initial image $f$ is to use a variational formulation by considering the problem
\be\label{vp}
\inf \left\{ F(u)+ \Lambda \int_\Omega |f-u|^2 \,:\, u\in \mathcal{A} \right\}.
\ee
Here $\mathcal{A}$ is a suitable functional space, $\Lambda>0$ is the fidelity parameter which fixed the grade of similarity with the original $f$ and the functional $F$, called ﬁlter, has a regularization role. Minimizers of \eqref{vp} are the better images.    

Many kind of filters have been proposed starting from the most famous one by Rudin, Osher and Fatemi (see \cite{ROF}) where 
\[
F(u)=|D u|(\Omega)
\]
and the minimization problem is  
\be\label{rof}
\min \left\{ |D u|(\Omega)+ \int_\Omega |f-u|^2 \,:\, u\in L^2(\Omega) \right\}. \tag{ROF}
\ee
The advantages of the minimization model \eqref{rof} are mainly the fact that the BV regularization term allows for discontinuities but disfavors large oscillations and moreover the strict convexity due to the $L^2$ approximation term uniquely determines the minimizers $u$ in terms of the datum $f$ and the chosen fidelity $\Lambda>0$.

On the other hand, the ROF model is not contrast invariant, i.e. if $u$ is the solution of \eqref{rof} for the initial $f$, then $cu$ may not be the solution for $cf$. This relies on the $L^2$ norm in the approximation term.  A model that closes this lack of contrast invariance was proposed by Chan and Esedoglu (see \cite{CE}) by considering the model
\[
\inf \left\{ |D u|(\Omega)+  \Lambda\int_\Omega |f-u| \,:\, u\in BV(\Omega) \right\} \tag{CE}
\]
where the $L^1$ norm appears in the fidelity term. Nevertheless, this model is only convex thus the uniqueness of minimizers is not guaranteed. 

Other variants have been proposed, by considering families of filters varying with respect to a small parameter $\eps$. In \cite{AK} the authors proposed as filter a non local functional modeled on one considered by Bourgain, Brezis, Mironescu in \cite{BBM}
\[
AK_\eps(u)= \int_\Omega\int_\Omega \frac{|u(x)-u(y)|}{|x-y|}\rho_\eps(|x-y|)\, dxdy.
\]
In \cite{GO} the non local filter includes a given weight function:
\[
GO(u)= \int_\Omega\left(\int_\Omega \frac{|u(x)-u(y)|^2}{|x-y|^2}w(x,y)\right)^{1/2}\, dxdy,
\]
and by choosing $w(x,y)=\rho_\eps(|x-y|)$ one has the corresponding family $GO_\eps$ of filters.

Considering families of filters the aim is twofold: first, one investigates the existence of a minimizer for the approximated functional when $\eps$ is fixed and then one can study the behavior of these minimizers %(or almost minimizers) 
as $\eps \to 0$. It is proved that the minimizers of $AK_\eps$ and $GO_\eps$ converge to the unique solution of the problem 
\be\label{rof2}
\inf \left\{ k |D u|(\Omega) + \Lambda\int_\Omega |f-u|^2 \,:\, u\in L^2(\Omega) \cap BV(\Omega)\ \right\}   \tag{$ROF_k$}
\ee
with the constant $k=\int_{\mathbb{S}^{d-1}}|\sigma\cdot e|\,d\sigma$ and $k=\left(\int_{\mathbb{S}^{d-1}}|\sigma\cdot e|^2\,d\sigma\right)^{1/2}$ respectively.
We remark that the functional in \eqref{rof2} is strictly convex and by standard functional analysis the solution is unique. 

Here, in the spirit of functionals converging to the BV-norm considered in \cite{BourNguyen, BreNguyen, BreNguyen2, AK}, the following functional is studied
\be\label{gseps}
F_\eps(u,\Omega) = K_\eps(u,\Omega)+ \Lambda \int_\Omega |f-u|^q, \quad q\geq 1 
\ee
where for brevity we denote $K_\eps(u,\Omega)=K_\eps(u,1,\Omega)$. We deal with the corresponding minimization problem
%\be\label{gs}
%F(u) = \frac 1 4 \int_\Omega |\nabla u| + \Lambda\int_\Omega |f-u|^q, \quad q\geq 1 
%\ee
\[
\inf \left\{ F_\eps(u,\Omega) \,:\, u\in L^q(\Omega) \right\}.
\]
For $q>1$, we claim that minimizers as $\eps \to 0$ converge to the solution of the problem
\[
\inf \left\{ F(u,\Omega) \,:\, u\in L^q(\Omega) \cap BV(\Omega) \right\},  \tag{$ROF^q$}
\]
where 
\[
F(u,\Omega)=\frac{1}{4} |D u|(\Omega) + \Lambda\int_\Omega |f-u|^q.
\]

We are now in position to prove Theorem \ref{mainappl}.
\begin{proof}[Proof of Theorem \ref{mainappl}]
	For every $\eps >0$, the functional $F_\eps$ defined on $L^q(\Omega)$ in \eqref{gseps} is convex, since $K_\eps(u,\Omega)$ is convex as showed in \eqref{convK}. Moreover, $F_\eps$ is lower semicontinuous for the strong $L^q$ topology. Indeed, given $u_n\to u$ strongly in $L^q(\Omega)$, it converges also in $L^1(\Omega)$ and by Fatou's Lemma, we have
	\[
	\liminf_{n\to +\infty} K_\eps(u_n,\Omega) \leq K_\eps(u,\Omega).
	\]
	Therefore $F_\eps$ is also lower semicontinuous for the weak topology in $L^q(\Omega)$. When $q>1$ the space $L^q(\Omega)$ is reflexive and then there exists a minimizer $u_\eps$ of $F_\eps$ in $L^q(\Omega)$. The uniqueness follows by the strict convexity. 

 In order to prove that $u_\eps\to u_0$, since $q>1$, we assume that there exists a subsequence $u_{\eps_k}$ such that $u_{\eps_k} \rightharpoonup v$ weakly in $L^q(\Omega)$. We will prove that $v=u_0$ and we divide the proof in two steps.

{\it Step 1.} We first observe that, since the limsup inequality holds true in Theorem \ref{gammaK} for $K_\eps$, there exists $v_\eps$ contained in $L^1(\Omega)$ such that $v_\eps \to u_0$ in $L^1(\Omega)$ and
 \[
 \limsup_{\eps\to 0} K_\eps (v_\eps,\Omega) \leq \frac{1}{4} |Du_0|(\Omega).
 \]
 Let us consider, for $\tau>0$ the truncation function $T_\tau(\cdot)$ defined in \eqref{truncfunc}. Since $u_\eps$ is a minimizer of $F_\eps$ we have
\be\label{uepsminim}
F_\eps(u_\eps,\Omega) \leq F_\eps(T_\tau v_\eps,\Omega)\leq K_\eps(T_\tau v_\eps,\Omega)+ \Lambda \int_\Omega |f-T_\tau v_\eps|^q. 
\ee
Moreover, by Proposition \ref{trunc_lemma}, as the truncation may not increase the oscillation in every cube $Q^\prime$, for every $w$
\be\label{trunest}
\fint_{Q^\prime} \left|T_\tau w - \fint_{Q^\prime} T_\tau w\right|\, dx \leq \fint_{Q^\prime} \left| w - \fint_{Q^\prime} w\right|\, dx.
\ee
Combining \eqref{uepsminim} and \eqref{trunest} we have
\[
F_\eps(u_\eps,\Omega) \leq K_\eps(v_\eps,\Omega)+ \Lambda \int_\Omega |f-T_\tau v_\eps|^q.  
\]
Letting $\eps\to 0$ and using the limsup inequality, we have
\[
 \limsup_{\eps\to 0} F_\eps(u_\eps,\Omega) \leq \frac{1}{4} |Du_0|(\Omega)+ \Lambda \int_\Omega |f-T_\tau u_0|^q.
\]
Moreover, letting $\tau\to +\infty$,
\be\label{limsupeps}
 \limsup_{\eps\to 0} F_\eps(u_\eps,\Omega) \leq \frac{1}{4} |Du_0|(\Omega)+ \Lambda \int_\Omega |f- u_0|^q = F(u_0,\Omega).
\ee

{\it Step 2.}  We claim that  for every $w\in L^1(\Omega)$ and for every sequence $w_\eps \in L^1(\Omega)$ such that $w_\eps \rightharpoonup w$ weakly in $L^1(\Omega)$ we have
\be\label{g1weak}
 \liminf_{\eps\to 0} K_\eps(w_\eps,\Omega) \geq  \frac{1}{4} |D w|(\Omega).
\ee
We fix fix an open set $A\subset\subset \Omega$, $\sigma>0$ and we set $w_{\sigma,\eps}(x)=(\varrho_\sigma*w_\eps)(x)$, as in the proof of Lemma \ref{lemmaconvol}.
%Fix an open set $A\subset\!\subset\O$ and $0<\sigma<{\rm dist}(A,\partial\O)$. For all $x\in A$ set $w_{\sigma,\eps}(x)=(\varrho_\sigma*w_\eps)(x)$, where $\varrho$ is a standard mollifier with compact support in the unit ball $B$ and $\varrho_\sigma(x)=\sigma^{-n}\varrho(x/\sigma)$. 
Since $w_\eps\rightharpoonup w$ weakly in $L^1(\Omega)$, for each fixed $\sigma>0$,
\[
w_{\sigma,\eps}=\varrho_\sigma*w_\eps \to \varrho_\sigma*w\ = w_\sigma\,\, \text{ strongly in } L^1(A) \text{ as } \eps \to 0. 
\]
Moreover by \eqref{Kconvol} and the $\Gamma$-liminf inequality for $K_\eps$, we have
\[
 \liminf_{\eps\to 0} K_\eps(w_\eps,\Omega) \geq  \liminf_{\eps\to 0} K_\eps (w_{\sigma,\eps},A) \geq \frac{1}{4} |Dw_\sigma|(A). 
\]
Letting $\sigma\to 0$ and the $A\uparrow \Omega$,  we conclude proving \eqref{g1weak}.

Now, applying this claim to the sequence $u_{\eps_k}\rightharpoonup v$, we have
\[
 \liminf_{\eps_k\to 0} K_{\eps_k}(u_{\eps_k},\Omega) \geq \frac{1}{4} |Dv|(\Omega)
\]
and therefore
\be\label{liminfeps}
 \liminf_{\eps_k\to 0} F_{\eps_k}(u_{\eps_k},\Omega) \geq \frac{1}{4} |Dv|(\Omega)+ \Lambda \int_\Omega |f- v|^q = F(v,\Omega).
\ee
Combining \eqref{liminfeps} and \eqref{limsupeps}, by the uniqueness of minimizers, we have
\[
u_0=v.
\]
It remains to prove that $F_\eps(u_\eps,\Omega)\to F(u_0,\Omega)$. We write
\[
\Lambda \int_\Omega |f- u_\eps|^q = F_\eps(u_\eps,\Omega) - K_\eps(u_\eps,\Omega) 
\]
and we use \eqref{liminfeps} and $\Gamma$-limsup inequality for  $K_\eps$ to have
\be\label{uepstrong}
 \limsup_{\eps_k\to 0} \Lambda \int_\Omega |f- u_{\eps_k}|^q \leq F(u_0,\Omega) - \frac{1}{4} |Du_0|(\Omega)=\Lambda \int_\Omega |f- u_0|^q.
\ee
We know that  $u_{\eps_k}$ such that $u_{\eps_k} \rightharpoonup u_0$ weakly in $L^q(\Omega)$ so by \eqref{uepstrong} $u_{\eps_k} \to u_0$ strongly in $L^q(\Omega)$. This implies $u_{\eps} \to u_0$ strongly in $L^q(\Omega)$ and then
\[
\liminf_{\eps\to 0} F_{\eps}(u_{\eps},\Omega)\geq \frac{1}{4} |Du_0|(\Omega)+ \Lambda \int_\Omega |f- u_0|^q =F(v,\Omega)
\]
which combined with \eqref{limsupeps} conclude the proof.  
 \end{proof}

In Theorem \ref{mainappl}, one cannot replace the condition $q>1$ by $q=1$. In this case we prove a slight generalization result by consider almost minimizers.
%we can not prove the existence of a minimizer to $F_\eps$ ..... minimizers of $F_\eps$ might not exist; however, one can always consider almost minimizers and prove a generalization of Theorem \ref{mainappl}.

\begin{theo}\label{quasimin}
Let $q=1$ and $\Omega\subseteq \R^n$ be a smooth bounded open set. Let $f\in L^1(\Omega)$ and let $\{\delta_\eps\}$, $\{\tau_\eps\}$ two positive sequences converging to 0 as $\eps\to 0$. Let $\{u_{\eps}\}\in L^1(\Omega)$ equi-bounded in $L^1(\Omega)$ such that
	\[
	F_{\delta_\eps}(u_{\eps},\Omega)\leq \inf_{u\in L^1(\Omega)} F_{\delta_\eps}(u,\Omega) + \tau_\eps.
	\]
Then there exists a subsequence $\{u_{\eps_k}\}$ of $\{u_{\eps}\}$ such that $u_{\eps_k}$ converges to $u_0$ in $L^1(\Omega)$ where $u_0$ is a minimizer of the functional $F$ defined on $L^1(\Omega)\cap BV(\Omega)$. 
\end{theo}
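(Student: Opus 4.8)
The plan is to run the standard two-step scheme for convergence of almost-minimizers under $\Gamma$-convergence, namely equi-coercivity (compactness) together with the $\Gamma$-liminf/$\Gamma$-limsup inequalities already supplied by Theorem \ref{gammaK}. I first record two soft facts: along any sequence $\delta_\eps\to 0$ the functionals $K_{\delta_\eps}$ still $\Gamma$-converge in $L^1$ to $\tfrac14|D\cdot|(\Omega)$ (the $\Gamma$-limit as the continuous parameter tends to $0$ is inherited by every vanishing sequence), and adding the fidelity term $\Lambda\int_\Omega|f-\cdot|$, which is Lipschitz for the $L^1$ topology, perturbs neither inequality. Thus $F_{\delta_\eps}$ behaves like a $\Gamma$-converging family with limit $F$.

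First I would derive a uniform energy bound. Comparing the almost-minimizer with the constant $0$, for which $K_{\delta_\eps}(0,\Omega)=0$, gives $\inf_{L^1}F_{\delta_\eps}\le F_{\delta_\eps}(0,\Omega)=\Lambda\|f\|_{L^1(\Omega)}$, whence $F_{\delta_\eps}(u_\eps,\Omega)\le \Lambda\|f\|_{L^1(\Omega)}+\tau_\eps\le C$; in particular $\sup_\eps K_{\delta_\eps}(u_\eps,\Omega)<\infty$.

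The heart of the matter is to upgrade the equi-boundedness of $\{u_\eps\}$ in $L^1(\Omega)$ together with the bound $\sup_\eps K_{\delta_\eps}(u_\eps,\Omega)<\infty$ to relative compactness in $L^1(\Omega)$, and I expect this to be the main obstacle. No uniform control of $|Du_\eps|(\Omega)$ is available, because the lower bound $\liminf_\eps K_\eps(v,\Omega)\ge\tfrac14|Dv|(\Omega)$ is only asymptotic in $\eps$ for fixed $v$ and cannot be read off at the diagonal pair $(u_\eps,\delta_\eps)$. To circumvent this I would mollify: for fixed $\sigma>0$ and $A\subset\subset\Omega$, inequality \eqref{Kconvol} gives $K_{\delta_\eps}(u_\eps*\rho_\sigma,A)\le C$, while $\|u_\eps*\rho_\sigma\|_{W^{1,1}(A)}\le\|u_\eps\|_{L^1(\Omega)}(1+\|\nabla\rho_\sigma\|_{L^1})\le C(\sigma)$, so $\{u_\eps*\rho_\sigma\}_\eps$ is relatively compact in $L^1(A)$ by Rellich. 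A diagonal extraction over a sequence $\sigma_j\to0$ then yields a subsequence along which every $u_\eps*\rho_{\sigma_j}$ converges in $L^1(A)$. To conclude that $\{u_\eps\}$ itself is Cauchy, what remains is the uniform modulus-of-continuity estimate $\sup_\eps\|u_\eps-u_\eps*\rho_\sigma\|_{L^1(A)}\to0$ as $\sigma\to0$, equivalently $\sup_\eps\sup_{|h|\le\sigma}\|u_\eps(\cdot-h)-u_\eps\|_{L^1}\to0$. This is exactly the BBM-type compactness: the uniform bound on the BMO-type energy $K_{\delta_\eps}(u_\eps,\Omega)$ forces such a uniform modulus, and proving it directly from the cube-covering definition \eqref{Keps} is the technical core of the argument. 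Having it, one extracts $u_{\eps_k}\to u_0$ in $L^1(\Omega)$.

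Finally I would identify $u_0$ as a minimizer of $F$ on $L^1(\Omega)\cap BV(\Omega)$. From $K_{\delta_{\eps_k}}(u_{\eps_k},\Omega)\le C$ and the $\Gamma$-liminf inequality of Theorem \ref{gammaK} one gets $\tfrac14|Du_0|(\Omega)\le\liminf_k K_{\delta_{\eps_k}}(u_{\eps_k},\Omega)<\infty$, so $u_0\in BV(\Omega)$, and together with the $L^1$-continuity of the fidelity this gives $\liminf_k F_{\delta_{\eps_k}}(u_{\eps_k},\Omega)\ge F(u_0,\Omega)$. For the reverse comparison, fix any $w\in L^1(\Omega)\cap BV(\Omega)$, take a recovery sequence $w_\eta\to w$ in $L^1$ with $\limsup_\eta K_\eta(w_\eta,\Omega)\le\tfrac14|Dw|(\Omega)$ from the $\Gamma$-limsup inequality, and evaluate it along $\eta=\delta_{\eps_k}$; since $q=1$ no truncation is needed, as $\int_\Omega|f-w_{\delta_{\eps_k}}|\to\int_\Omega|f-w|$ automatically. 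Almost-minimality gives $F_{\delta_{\eps_k}}(u_{\eps_k},\Omega)\le F_{\delta_{\eps_k}}(w_{\delta_{\eps_k}},\Omega)+\tau_{\eps_k}$, and passing to the $\limsup$ yields $F(u_0,\Omega)\le F(w,\Omega)$. As $w$ is arbitrary, $u_0$ minimizes $F$, which is the claim.
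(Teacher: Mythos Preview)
Your overall two-step scheme---compactness followed by identification via the $\Gamma$-liminf/limsup inequalities---matches the paper's, and your identification step is correct. In fact your observation that for $q=1$ the fidelity $u\mapsto\Lambda\int_\Omega|f-u|$ is $L^1$-Lipschitz, so that $\int_\Omega|f-w_{\delta_{\eps_k}}|\to\int_\Omega|f-w|$ follows directly from $w_{\delta_{\eps_k}}\to w$ in $L^1$, is a genuine simplification: the paper carries the truncation $T_A$ over from the proof of Theorem~\ref{mainappl}, but here it is superfluous.

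The substantive difference, and the gap in your proposal, is the compactness step. The paper does not pursue your mollification/Fr\'echet--Kolmogorov route; it invokes directly the piecewise-constant approximation $u_\eps^{\eps/2}$ from \cite{ARB}, whose Lemmas~9--11 give that this approximant has total variation uniformly bounded by $K_{\delta_\eps}(u_\eps,\Omega)$ and that $u_\eps-u_\eps^{\eps/2}\to0$ in $L^1(\Omega)$; BV compactness (using the assumed $L^1$ equi-bound) then yields the convergent subsequence. By contrast, your argument hinges on the uniform modulus estimate $\sup_\eps\|u_\eps-u_\eps*\rho_\sigma\|_{L^1(A)}\to0$ as $\sigma\to0$, which you label ``BBM-type compactness'' and the ``technical core'' but do not prove. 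This estimate is true, yet deducing it from $\sup_\eps K_{\delta_\eps}(u_\eps,\Omega)<\infty$ is not soft: one must quantify, at the single scale $\delta_\eps$, that the oscillation control encoded in $K_{\delta_\eps}$ forces $u_\eps$ to be $L^1$-close to a function of uniformly bounded variation---which is precisely the content of the \cite{ARB} lemmas. So your sketch is sound in structure but incomplete exactly where the paper resolves the issue by citation; if you want a self-contained argument along your lines, you would end up reproving something equivalent to those lemmas.
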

\begin{proof}
We immediately observe that $\liminf_{\eps \to 0} K_\eps(u_\eps)<\infty$. Repeating the argument as in Theorem 2 of \cite{ARB} we can consider an approximating sequence $u_\eps^{\eps/2}$ as defined in (19) of \cite{ARB}. By Lemma 9, 10 and 11 in \cite{ARB}, $u_\eps^{\eps/2}$ has uniformly bounded total variation and $u_\eps-u_\eps^{\eps/2}\to 0$ in $L^1(\Omega)$. Then, by BV compactness theorem \cite[Thm. 3.23]{AFP} there exists a subsequence $\eps_k$ such that $u_{\eps_k}^{\eps_k/2}$ converges to some $u_0$ and then $u_{\eps_k}\to u_0$  in $L^1(\Omega)$. By Fatou's Lemma and the $\Gamma$-liminf property of $K_\eps$ it follows
	\[
	F(u_0,\Omega)\leq \liminf_{k\to \infty} F_{\delta_{\eps_k}}(u_{\eps_k},\Omega).
	\]
	We prove now that $u_0$ is a minimizer of $F$ in $L^1(\Omega)\cap BV(\Omega)$. Let $v\in L^1(\Omega)\cap BV(\Omega)$ be a minimizer of $F$. Applying Theorem \ref{gammaK}  there exists $v_\eps\in L^1(\Omega)$ such that $v_\eps\to v$ in $L^1(\Omega)$ and by the $\Gamma$-limsup inequality,
	\[
	\limsup_{\eps\to 0} K_{\delta_\eps}(v_\eps,\Omega) \leq \frac{1}{4} |Dv|(\Omega).
	\]
	For $A>0$, let $T_A v$ the truncation of $v$ at level $A$ defined in \eqref{truncfunc}. By Proposition \ref{trunc_lemma} we have
	\[
	K_{\delta_\eps}(T_A v_\eps,\Omega) \leq K_{\delta_\eps}(v_\eps,\Omega)
	\]
	and by definition of $u_\eps$, we get
	\begin{multline}\label{est1}
	F_{\delta_\eps}(u_\eps,\Omega)\leq K_{\delta_\eps}(T_A v_\eps,\Omega) + \tau_\eps + \Lambda \int_\Omega |T_A v_\eps -f| \\ \leq K_{\delta_\eps}(v_\eps,\Omega) + \tau_\eps + \Lambda \int_\Omega |T_A v_\eps -f|.
	\end{multline}
	Taking first the $\liminf$ as $\eps\to 0$ and then let $A\to +\infty$, it follows
	\[
	F(u_0,\Omega)\leq  \frac{1}{4} |Dv|(\Omega) + \Lambda \int_\Omega |v -f|
	\]
	which implies that $u_0$ is a minimizer of $F$.\\
	%It remains to prove that the sub-sequence $\{u_{\eps_k}\}$ converges to $u_0$ in $L^q(\Omega)$. %We know that $u_\eps \to u_0$ in $L^1(\Omega)$ and 
 %By passing to $\limsup$ in \eqref{est1}, then
 %\[
 %F(u_0,\Omega)\geq \limsup_{k\to \infty} F_{\delta_{\eps_k}}(u_{\eps_k},\Omega) \geq F(u_0,\Omega)
 %\]
 %and by $\Gamma$-convergence 
 %\[
 % \liminf_{k\to \infty} K_{\delta_{\eps_k}}(u_{\eps_k},\Omega) \geq \frac{1}{4} |D u_0|(\Omega).
 %\]
 %Then
 %\[
 %\lim_{k\to \infty} \Lambda \int_\Omega |u_{\eps_k}-f|^q = \lim_{k\to \infty} F_{\delta_{\eps_k}}(u_{\eps_k},\Omega) - K_{\delta_{\eps_k}}(u_{\eps_k},\Omega) =\Lambda \int_\Omega | u_0-f|^q.
 %\]
 %The last equation implies that $u_{\eps_k}-f$ converges to $u_0-f$ in $L^q(\Omega)$ and then the proof is concluded. 
\end{proof}

\begin{rema}
Let us note that in the previous Theorem it is possible to choose a sequence of almost minimizers which is equibounded in $L^1(\O)$, at least for smooth datum $f$. Indeed, if $f\in L^\infty(\O)$ and $\{u_\eps\}$ is a sequence of almost minimizers, then the sequence $\{T_{\|f\|_\infty}(u_\eps)\}$ of truncation at level $\|f\|_\infty$ is equibounded in $L^1(\O)$ and a still almost minimizing. 
\end{rema}

%%%%%%%%%%%%%%%%%%%%%%%%%%%%%%%%%%%%%%%%%%%%%%%%%%
\bigskip

\noindent{\bf Acknowledgments.} The research of S.GLB. has been funded by PRIN Project 2022ZXZTN2. %2017TEXA3H {\it``Gradient flows, Optimal Transport and Metric Measure Structures''} funded by the Italian Ministry of Research and University. 
The research of R.S. has been partially funded by PRIN Project 2022XZSAFN and PRIN-PNRR P2022XSF5H. The authors  are members of "GNAMPA" of Istituto Nazionale di Alta Matematica (INdAM),  and they were  partially supported by the project 2024 "Problemi variazionali associati alla variazione totale".

\bigskip

\bigskip
\small\noindent
Serena Guarino Lo Bianco:\\
Dipartimento di Scienze F.I.M., Universit\`a degli Studi di Modena e Reggio Emilia\\
Via Campi 213/A, 41125 Modena - ITALY\\
{\tt serena.guarinolobianco@unimore.it}\\
%{\tt https://sites.google.com/site/serenaguarinolobianco/home}

\bigskip
\small\noindent
Roberta Schiattarella:\\
Dipartimento di Matematica e Applicazioni "R. Caccioppoli", Universit\`a di Napoli ``Federico II"\\
via Cintia, 80126 Napoli, ITALY\\
{\tt roberta.schiattarella@unina.it}\\
%{\tt http://www.dm.unipi.it/pages/buttazzo/}


\begin{thebibliography}{999}



\bibitem{ABBF} {\sc L. Ambrosio, J. Bourgain, H. Brezis,  A. Figalli},\emph{ BMO-type norms related to the perimeter of sets}, Comm. Pure Appl. Math., 69 (2016), 1062–1086.

\bibitem{AC}{\sc L. Ambrosio, G. Comi,} \emph{Anisotropic Surface Measures as Limits of Volume Fractions}. Current Research in Nonlinear Analysis 135 (2018), 1-32.

\bibitem{AFP} {\sc L. Ambrosio, N. Fusco, D. Pallara}, \emph{ Functions of bounded variation and free discontinuity problems}. Oxford Mathematical Monographs. The Clarendon Press, Oxford University Press, New York, 2000.

\bibitem{ARB} {\sc A. Arroyo-Rabasa, P. Bonicatto, G. Del Nin},\emph{ Representation of the total variation as a $\Gamma$-limit of BMO-type seminorms.}, available on  arXiv:2112.03832.

\bibitem{AK}{\sc G. Aubert, P. Kornprobst}, \emph{Can the nonlocal characterization of Sobolev spaces by Bourgain et al. be useful for solving variational problems?}, SIAM J. Numer. Anal. 47 (2009) 844–860.

\bibitem{BBM} {\sc J. Bourgain,  H. Brezis, P. Mironescu},\emph{ A new function space and applications}, Journal of the EMS, 17 (2015), 2083–2101.

\bibitem{BourNguyen}{\sc J. Bourgain, H.-M. Nguyen}, \emph{A new characterization of Sobolev spaces}, C. R. Acad. Sci. Paris 343 (2006) 75–80.

\bibitem{BreNguyen}{\sc H. Brezis, HM. Nguyen}, \emph{Non-local Functionals Related to the Total Variation and Connections with Image Processing}. Ann. PDE 4(9) (2018). 

\bibitem{BreNguyen2}{\sc H. Brezis, HM. Nguyen}, \emph{Two subtle convex nonlocal approximations of the {BV}-norm}, Nonlinear Anal., 137, (2016), 222-245. 

\bibitem{CE}{\sc T. F. Chan and S. Esedoglu}, \emph{Aspects of total variation regularized $L^1$ function approximation.} SIAM J. Appl. Math. , 65(5) (2005), 1817--1837. %ISSN 0036-1399. doi: 10.1137/040604297. URL https://doi.org/10.1137/040604297

\bibitem{DM}{\sc G. Dal Maso}, \emph{An Introduction to $\Gamma$-convergence}. Birkhauser, Boston, 1993. 

\bib{DFP} {\sc G. De Philippis, N. Fusco, A. Pratelli}, \emph{On the approximation of SBV functions}. Atti Accad. Naz. Lincei Rend. Lincei Mat. Appl. 28 (2017), 369-413.

\bibitem{DT}{\sc F. Demengel, R. Temam,} \emph{Convex functions of a measure and applications}, Indiana University Mathematical Journal,  33 (5), (1984), 673--709.

\bibitem{FGS} {\sc F. Farroni, S. Guarino Lo Bianco, R. Schiattarella}, \emph{ BMO-type seminorms generating Sobolev functions}, J. Math. Anal. Appl. 491 (2020), 124298

\bibitem{FFGS}{\sc F. Farroni, N. Fusco, S. Guarino Lo Bianco, R. Schiattarella}, \emph{A formula for the anisotropic total variation of BV functions}. J. Funct. Anal. 278(9), 108--451 (2020)

\bibitem{FM} {\sc M. Fuchs, J. Muller}, \emph{A higher order TV-type variational problem related to the denoising and inpainting of images}, Nonlinear Anal. 154 (2017), 122-147.

\bibitem{FMS1} {\sc N. Fusco, G. Moscariello, C. Sbordone},\emph{ A formula for the total variation of SBV functions}. J. Funct. Anal. 270 (2016), no. 1, 419-446.

\bibitem{FMS2} {\sc N. Fusco, G. Moscariello, C. Sbordone},\emph{BMO-type seminorms and Sobolev functions}. ESAIM Control Optim. Calc. Var. 24 (2018), no. 2, 835-847.

%\bib{Leoni}{\sc G. Leoni} \emph{A first course in Sobolev spaces}. Second Edition. Graduate Studies in Mathematics, 181. American Mathematical Society, Providence, RI (2017).

\bibitem{GO} {\sc G. Gilboa, S. Osher}, \emph{Nonlocal operators with applications to image processing}, Multiscale Model. Simul. 7 (2008), 1005-1028.

\bibitem{GS} {\sc S. Guarino Lo Bianco, R. Schiattarella}, \emph{A BMO-Type Characterization of Higher Order Sobolev Spaces}, Potential Anal (2022). https://doi.org/10.1007/s11118-022-09987-8

\bibitem{JS} {\sc R.L. Jerrard, H.M. Soner}, \emph{Functions of bounded higher variation}, Indiana Univ. Math. J., 51 (2002), 645--677.

\bibitem{Marcus} {\sc M. Marcus}, \emph{Exceptional sets with respect to Lebesgue differentiation of functions in Sobolev spaces}, Ann. Sc. Norm. Pisa, Serie VI, 1 (1974), 113-130.

\bibitem{ROF} {\sc L. I. Rudin, S. Osher, and E. Fatemi}, \emph{Nonlinear total variation based noise removal algorithms}, Physica D: Nonlinear Phenomena, 60 (1992), 259--268.% doi: 10.1016/01672789(92)90242-F. 

%\bib{ALT91}{A. Alvino, P.L. Lions, G. Trombetti}{Comparison results for elliptic and parabolic equations via symmetrization: a new approach}{Differential Integral Equations, {\bf4} (1) (1991), 25--50}
%
%\bib{BK}{M.~Belloni, B.~Kawohl}{The pseudo-$p$-Laplace eigenvalue problem and viscosity solutions as $p\to\infty$}{ESAIM: Control Optim. Calc. Var., {\bf10} (1) (2004), 28--52}

%\bib{BBP21}{M.~van den Berg, G.~Buttazzo, A.~Pratelli}{On the relations between principal eigenvalue and torsional rigidity}{Commun. Contemp. Math., {\bf23} (8) (2021), 28 pages}
%
%\bib{BBV}{M.~van den Berg, G.~Buttazzo, B.~Velichkov}{Optimization problems involving the first Dirichlet eigenvalue and the torsional rigidity}{In ``New Trends in Shape Optimization'', Birkh\"auser Verlag, Basel (2015), 19--41}
%
%\bib{bfnt16}{M.~van den Berg, V.~Ferone, C.~Nitsch, C.~Trombetti}{On P\'olya's inequality for torsional rigidity and first Dirichlet eigenvalue}{Integral Equations Operator Theory, {\bf86} (2016), 579--600}
%
%\bib{bra14}{L.~Brasco}{On torsional rigidity and principal frequencies: an invitation to the Kohler-Jobin rearrangement technique}{ESAIM Control Optim. Calc. Var., {\bf20} (2014), 315--338}
%
%\bib{BBP22}{L.~Briani, G.~Buttazzo, F.~Prinari}{Inequalities between torsional rigidity and principal eigenvalue of the $p$-Laplacian}{Calc. Var. Partial Differential Equations, {\bf61} (2) (2022), article n. 78}
%
%%\bib{bra18}{L.~Brasco}{On principal frequencies and inradius in convex sets}{Bruno Pini Math. Anal. Semin., {\bf9} (2018), 78--101}
%%
%%\bib{bra20}{L.~Brasco} {On principal frequencies and isoperimetric ratios in convex sets}{Ann. Fac. Sci. Toulouse Math., {\bf29} (4) (2020), 977--1005} 
%%
%%\bib{BS}{L.~Brasco, A.~Salort}{A note on homogeneous Sobolev spaces of Fractional order}{Ann. Mat. Pura Appl., {\bf198} (2019), 1295--1330}
%%
%%\bib{bro95}{F.~Brock}{Continuous Steiner-symmetrization}{Math. Nachr., {\bf172} (1995), 25--48}
%%
%%\bib{bro00}{F.~Brock}{Continuous rearrangement and symmetry of solutions of elliptic problems}{Proc. Indian Acad. Sci., {\bf110} (2) (2000), 157--204}
%\bib{BDV}{L.~Brasco,G.~De Philippis,B.~Velichkov}{Faber-Krahn inequalities in sharp quantitative form}{Duke Math. J. 164(9) 1777-1831, (2015).}
%
%\bib{B00}{D.~Bucur}{Uniform concentration-compactness for Sobolev spaces on variable domains}{J. Differential Equations, 162 (2000), pp. 427–450}
%
%\bib{BB05}{D.~Bucur, G.~Buttazzo}{Variational Methods in Shape Optimization Problems}{Progress in Nonlinear Differential Equations {\bf65}, Birkh\"auser Verlag, Basel (2005)}
%
%\bib{BB12}{D.~Bucur, G.~Buttazzo}{On the characterization of the compact embedding of Sobolev spaces}{Calc. Var. 44, 455–475 (2012)}
%
%\bib{BBV14}{D.~Bucur, G.~Buttazzo, B.~Velichkov}{Spectral Optimization Problems for Potentials and Measures}{SIAM Journal on Mathematical Analysis 46 4 2956-2986, (2014)}
%
%\bib{BLNP}{D.~Bucur, J.~Lamboley, M.~Nahon, R.~Prunier}{TBA}{Paper in preparation}
%
%\bib{bgm17}{G.~Buttazzo, S.~Guarino Lo Bianco, M.~Marini}{Sharp estimates for the anisotropic torsional rigidity and the principal frequency}{J. Math. Anal. Appl., {\bf457} (2017), 1153--1172}
%
%\bib{BP21}{G.~Buttazzo, A.~Pratelli}{An application of the continuous Steiner symmetrization to Blaschke-Santal\'o diagrams}{ESAIM Control Optim. Calc. Var., {\bf27} (2021), 36 pages, DOI: 10.1051/cocv/2021038}
%
%\bib{DL}{M. Dambrine, J. Lamboley}{Stability in shape optimization with second variation.}{J. Diff. Equ. 267 (2019) 3009–3045.}
%%\bib{ciomur}{D.~Cioranescu, F.~Murat}{Un terme \'etrange venu d'ailleurs}{In ``Nonlinear partial differential equations and their applications. Coll\`ege de France Seminar Vol. II, Res. Notes in Math. {\bf60}, 98--138, 389--390, Pitman, Boston (1982)}
%

%\bib{dm88}{G.~Dal Maso}{$\Gamma$-convergence and $\mu$-capacities}{Ann. Scuola Norm. Sup. Pisa Cl. Sci., {\bf14} (1988), 423--464}
%
%\bib{dmmo87}{G.~Dal Maso, U.~Mosco}{Wiener's criterion and $\Gamma$-convergence}{Appl. Math. Optim., {\bf15} (1987), 15--63}
%
%\bib{Da}{E.~Davies}{Heat Kernels and Spectral Theory}{Cambridge University Press, Cambridge, 1989}
%
%\bib{dpga14}{F.~Della Pietra, N.~Gavitone}{Sharp bounds for the first eigenvalue and the torsional rigidity related to some anisotropic operators}{Math. Nachr., {\bf287} (2-3) (2014), 194--209}
%
%\bib{EvGa}{L. C.~Evans, R. F.~Gariepy}{Measure Theory and Fine Properties of Functions}{Stud. Adv. Math., CRC Press, Boca Raton, FL, 1992}

%\bib{Fto}{I.~Ftouhi}{On the Cheeger inequality for convex sets}{J. Math. Anal. Appl., {\bf504} (2) (2021), 125443}
%
%\bib{Hein}{J.~Heinonen, T.~Kilpel\"ainen, O.~Martio}{Nonlinear potential theory of degenerate elliptic equations}{Dover Publications Inc., Mineola-New York (2006)}
%
%\bib{he06}{A.~Henrot}{Extremum Problems for Eigenvalues of Elliptic Operators}{Birkh\"auser Verlag, Basel (2006)}
%
%\bib{hen}{A.~Henrot (editor)}{Shape Optimization and Spectral Theory}{De Gruyter, Berlin (2017)}

%\bib{HP18}{A.~Henrot, M.~Pierre}{Shape variation and optimization}{EMS Tracts in Mathematics {\bf28}, European Mathematical Society, Z\"urich (2018)}

%\bib{Her}{J.~Hersch}{Sur la fr\'equence fondamentale d'une membrane vibrante: \'evaluations par d\'efaut et principe de maximum}{Z. Angew. Math. Phys., {\bf11} (1960), 387--413}
%
%\bib{JLM}{P.~Juutinen, P.~Lindqvist, J. J.~Manfredi}{The $\infty$-Eigenvalue Problem}{Arch. Rational Mech. Anal., {\bf148} (1999), 89--105}
%
%\bib{Kaji}{R.~Kajikiya}{ A priori estimate for the first eigenvalue of the $p$-Laplacian}{Differential Integral Equations, {\bf28} (2015), 1011--1028}
%
%\bib{KaFr03}{B.~Kawohl, V.~Fridman}{Isoperimetric estimates for the first eigenvalue of the $p$-Laplace operator and the Cheeger constant}{Comment. Math. Univ. Carolin., {\bf44} (4) (2003), 659--667}

%\bib{kj78a}{M.T.~Kohler-Jobin}{Une m\'ethode de comparaison isop\'erim\'etrique de fonctionnelles de domaines de la physique math\'ematique. I. Premi\`ere partie: une d\'emonstration de la conjecture isop\'erim\'etrique $P\lambda^2\ge\pi j_0^4/2$ de P\'o1ya et Szeg\"o}{Z. Angew. Math. Phys., {\bf29} (1978), 757--766}
%
%\bib{kj78b}{M.T.~Kohler-Jobin}{Une m\'ethode de comparaison isop\'erim\'etrique de fonctionnelles de domaines de la physique math\'ematique. II. Seconde partie: cas inhomog\`ene: une in\'egalit\'e isop\'erim\'etrique entre la fr\'equence fondamentale d'une membrane et l'\'energie d'\'equilibre d'un probl\`eme de Poisson}{Z. Angew. Math. Phys., {\bf29} (1978), 767--776}
%
%\bib{luzu19}{I.~Lucardesi, D.~Zucco}{On Blaschke-Santal\'o diagrams for the torsional rigidity and the first Dirichlet eigenvalue}{Ann. Mat. Pura Appl., {\bf201} (1) (2022) 175--201}

%\bib{makai62}{E.~Makai}{On the principal frequency of a membrane and the torsional rigidity of a beam}{In ``Studies in Mathematical Analysis and Related Topics'', Stanford University Press, Stanford (1962), 227--231}
%
%\bib{Maz}{V.G.~Maz'ja}{Sobolev spaces} {Translated from the Russian by T. O. Shaposhnikova. Springer Series in Soviet Mathematics. Springer-Verlag, Berlin, (1985)}
%
%\bib{Pa}{E.~Parini}{An introduction to the Cheeger problem}{ Surv. Math. Appl. 6 (2011), 9–21}
%\bib{Pa15}{E.~Parini}{Reverse Cheeger inequality for planar convex sets}{J. Convex Anal., {\bf24} (1) (2017), 107--122}
%
%\bib{Poli}{G.~Poliquin}{Principal frequency of the p-Laplacian and the inradius of Euclidean domains}{J. Topol. Anal., {\bf7} (3) (2015), 505--511} 
%
%\bib{Prot}{M.H.~Protter} {A lower bound for the fundamental frequency of a convex region}{Proc. Amer. Math. Soc., {\bf81} (1981), 65--70.}

%\bib{V15}{B. ~Velichkov}{Existence and regularity results for some shape optimization problems}{Tesi. Scuola Normale Superiore di Pisa (Nuova Serie) 19. Edizioni della Normale, Pisa (2015)}

\end{thebibliography}
\end{document}